\theoremstyle{plain}      
\newtheorem{thm}{Theorem}[section]   
\newtheorem{question}[thm]{Question}   
\newtheorem{theorem}[thm]{Theorem}     
\newtheorem{corollary}[thm]{Corollary}     
\newtheorem{lemma}[thm]{Lemma}     
\newtheorem{prop}[thm]{Proposition}     
\newtheorem{proposition}[thm]{Proposition}     
\newtheorem{conjecture}[thm]{Conjecture}     
\newtheorem{hypothesis}[thm]{Hypothesis}  
\theoremstyle{definition}      
\newtheorem{defn}[thm]{Definition}     
\newtheorem{definition}[thm]{Definition}  
\newtheorem{example}[thm]{Example} 
\newtheorem{remark}[thm]{Remark}
\numberwithin{equation}{section}  
\DeclareMathAlphabet{\doba}{U}{msb}{m}{n}
\gdef\mR{\doba{R}}
\gdef\mZ{\doba{Z}}         
\def\J{P}
\def\Sym{\mathrm{Sym}}
\def\Id{\mathrm{Id}}
\def\T{\mathrm{T}}
\def\Ric{{\mathrm{Ric}}}
\def\di{{\rm d}}
\def\tr{\mathrm{tr}}
\def\e{{\varepsilon}}
\def\vol{{\mathrm{vol}}}
\def\rk{\mathrm{rk}}
\newcommand{\definedas}{\mathrel{\raise.095ex\hbox{\rm :}\mkern-5.2mu=}}
\author{Andrei Moroianu, Mihaela Pilca}
\address{Andrei Moroianu \\ Université Paris-Saclay, CNRS,  Laboratoire de mathématiques d'Orsay, 91405, Orsay, France, 
and Institute of Mathematics “Simion Stoilow” of the Romanian Academy, 21 Calea Grivitei, 010702 Bucharest, Romania}
\email{andrei.moroianu@math.cnrs.fr}
\address{Mihaela Pilca\\Fakult\"at f\"ur Mathematik\\
Universit\"at Regensburg\\Universit\"atsstr. 31 
D-93040 Regensburg, Germany,
and Institute of Mathematics “Simion Stoilow” of the Romanian Academy, 21 Calea Grivitei, 010702 Bucharest, Romania}
\email{mihaela.pilca@mathematik.uni-regensburg.de}
\begin{document}   

\subjclass[2010]{53C18}

\keywords{Conformal product structures, Weyl connections, reducible holonomy, triple products}

	\title{Reducible Riemannian manifolds with conformal product structures}

	\begin{abstract} We study conformal product structures on compact reducible Riemannian manifolds, and show that under a suitable technical assumption, the underlying Riemannian mani\-folds are either conformally flat, or triple products, \emph{i.e.} locally isometric to  Riemannian manifolds of the form $(M,g)$ with $M=M_1\times  M_2\times  M_3$ and $g=e^{2f}g_1+g_2+g_3$, where $g_i$ is a Riemannian metric on $M_i$, for $i\in\{1,2,3\}$, and $f\in C^\infty(M_1\times M_2)$.
	\end{abstract}
	
	\maketitle

	\section{Introduction}
	
	Conformal product structures were formally introduced in \cite{bm2011}, but they appear in the literature several decades earlier. They can be defined in various ways: either in terms of Weyl connections (see for instance \cite{g1995},  \cite{c2000}), or by means of conformal foliations (\emph{e.g.} \cite{v1979}), or finally by locally describing the metrics in the conformal class (\emph{cf.} \cite{bm2011}). 
	
	The point of view that we adopt here is the following. If $(M,c)$ is a conformal manifold, a conformal product structure on $(M,c)$ is defined as a Weyl connection $D$ (i.e. a torsion-free linear connection preserving the conformal class) with {\em reducible} holonomy representation. 
	
	Recall that by fixing a background metric $g$ in the conformal class $c$, every Weyl connection $D$ determines a 1-form $\theta$ (called the Lee form with respect to $g$), by the formula $Dg=-2\theta\otimes g$, and conversely, a Riemannian $g\in c$ together with any 1-form $\theta$ determines a unique Weyl connection $D$ whose Lee form with respect to $g$ is $\theta$ (see Definition \ref{weyl} and Equation \eqref{ko} below). A Weyl connection is called closed or exact if its Lee form with respect to some metric in $c$ is closed or exact. This definition does not depend on the choice of the metric in the conformal class and it is easy to see that a Weyl connection is exact (or closed) if and only if it is globally (or locally) the Levi-Civita connection of a metric in $c$.
	
	It follows from Merkulov-Schwachhöfer's classification of possible  irreducible holonomies of torsion-free linear connections \cite{ms1999} that every non-closed Weyl connection with irreducible holonomy representation on a conformal $n$-dimensional manifold has holonomy equal to the full conformal group $\mathrm{CO}(n)$ if $n\ne 4$. This result already constitutes a strong motivation for investigating Weyl connections with reducible holonomy.
	
	The tangent bundle of a manifold $M$ endowed with a conformal product structure $D$ splits by definition as the orthogonal direct sum of two $D$-parallel distributions $\T M=T_1\oplus T_2$, which are both integrable due to the fact that $D$ is torsion-free. 
	
	The problem we tackle in this paper is part of a more general program, which aims to classifying conformal product structures $D$ on compact conformal manifolds $(M,c)$ which carry a metric $g\in c$ with special holonomy. In order to avoid trivial solutions, we always assume that $D$ is different from the Levi-Civita connection of $g$. 
	
	According to the Berger-Simons holonomy theorem, if the metric $g$ has special holonomy, then $(M,g)$ is either Kähler, or Einstein, or reducible. The first two cases have already been studied recently: the problem was completely solved when $g$ is Kähler \cite{mp2025}, and a partial solution, under some simplifying assumption, was obtained when $g$ is Einstein \cite{mp2024b}, \cite{mp2025a}. Previous results on the related problem of conformally Einstein product spaces have been obtained by W. Kühnel und H.-B. Rademacher  \cite{kr1997},  \cite{kr2009},  \cite{kr2016}.  We study here the last remaining case, when $g$ has reducible holonomy, more precisely by asking the following:
	\begin{question} \label{q} Characterize compact Riemannian manifolds $(M,g)$ with reducible holonomy, which carry a conformal product structure $D$,  other than the Levi-Civita connection of $g$.
	\end{question}
	
	Note that the above framework can be characterized by means of tensor fields on $(M,g)$ in the following way. The connection $D$ is determined by the metric $g$ and a 1-form $\theta$, and the $D$-parallel decomposition of $\T M=T_1\oplus T_2$ is determined (up to the choice of the two factors) by an orthogonal $D$-parallel involution $S$ whose $\pm1$ eigenspaces are exactly $T_1$ and $T_2$. Similarly, the reducibility of $(M,g)$ is equivalent to the existence of a $\nabla$-parallel orthogonal involution $\J$ different from $\pm\Id_{\T M}$, where $\nabla$ is the Levi-Civita connection of $g$. Question \ref{q} is thus equivalent to the existence of a triple $(\theta,S,\J)$ on $(M,g)$, where $\theta$ is a non identically vanishing 1-form, and $S,\J$ are orthogonal involutions different from $\pm\Id_{\T M}$ satisfying 
	\begin{equation}DS=0,\qquad \nabla\J=0,
	\end{equation}
	where $D$ is determined by $\nabla$ and $\theta$ by means of Equation \eqref{ko} below.
	
	It turns out that solutions to Question \ref{q} can be constructed on products of three manifolds. More precisely, if $g_i$ is a Riemannian metric on a compact manifold $M_i$ for $i\in\{1,2,3\}$ and $f$ is a smooth function on $M_1\times M_2$, then the metric $g:=e^{2f}g_1+g_2+g_3$ on $M:=M_1\times M_2\times M_3$ is obviously a Riemannian product metric between $(M_1\times M_2, e^{2f}g_1+g_2)$ and $(M_3,g_3)$. Moreover, the conformal class of $g$ also admits a conformal product structure $D$, different from the Levi-Civita connection of $g$ if the warping function $f$ is non-constant along $M_2$, cf. Example \ref{etp} below. Riemannian manifolds which are locally isometric to the ones constructed in this way are called {\em triple products}.
	
	Another class of solutions is given by compact conformally flat manifolds. Indeed, as noticed at the beginning of Section 4 in \cite{bfm2023}, if $D$ is a flat Weyl connection (which is in particular closed) on a compact conformal $n$-dimensional manifold $(M,c)$ with $n\ge 3$, then by a result of Fried  \cite{Frie}, the universal cover $\widetilde M$, endowed with the Riemannian metric whose Levi-Civita connection is the pull-back of $D$ to $\widetilde M$, is isometric to $\mR^n\setminus \{0\}$ and $\pi_1(M)$ is a semi-direct product $K \rtimes \mZ$ between a finite group of isometries of $\mR^n$ fixing the origin and a group generated by a homothety of ratio $\lambda < 1$. In polar coordinates, the flat space $\mR^n \setminus {0}$ can be seen as the product $\mR_+ \times S^{n-1}$ together with the cone metric $dr^2 + r^2 g_S$ where $g_S$ is the round metric on the sphere. Consequently, $\pi_1(M)$ acts by isometries with respect to the metric $\frac{1}{r^2} dr^2 + g_S$, which is conformal to the previous metric and descends to $M$. In addition, this metric is a product metric, so it is reducible, implying that all compact manifolds carrying a flat Weyl connection are solutions of Question \ref{q}. 
	
	Our purpose here is to collect evidence in favor of the following:
	\begin{conjecture} \label{c} A compact connected Riemannian manifold $(M,g)$ with reducible holonomy which carries a conformal product structure $D$ different from the Levi-Civita connection of $g$ is either conformally flat, or a triple product.
	\end{conjecture}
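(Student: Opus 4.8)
My plan is to work entirely with the triple $(\theta,S,\J)$ on $(M,g)$ introduced above, where $DS=0$, $\nabla\J=0$, and $\theta\not\equiv 0$. The starting point is to make the condition $DS=0$ explicit: substituting the Koszul-type formula \eqref{ko} for $D$ into $D_X(SY)=S(D_XY)$ yields, after a short computation, the pointwise identity
\[
(\nabla_X S)Y=\theta(Y)\,SX-\theta(SY)\,X+g(X,SY)\,\theta^\sharp-g(X,Y)\,S\theta^\sharp .
\]
Reading this on the eigendistributions $T_\pm=\ker(S\mp\Id)$ shows that both $T_+$ and $T_-$ are totally umbilical, the mean curvature covector of each being the component of $\theta$ in the complementary factor. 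Since $M$ is compact, I would then normalise by fixing the Gauduchon metric in the conformal class, so that in addition $\delta^g\theta=0$; this is what later turns pointwise umbilicity into a global statement via integration.

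Next I would bring in the parallel involution $\J$. Because $\nabla\J=0$, the splitting $\T M=V_+\oplus V_-$ into the eigendistributions of $\J$ is $\nabla$-parallel and totally geodesic, so $(M,g)$ is locally a Riemannian product and $\nabla$ preserves each $V_\pm$. Writing $S$ in block form with respect to this splitting and $\theta=\alpha+\beta$ with $\alpha,\beta$ its $V_+$- and $V_-$-components, and differentiating the displayed identity blockwise, produces a coupled first-order system for the diagonal and off-diagonal blocks of $S$. The decisive object is the commutator $N\definedas[S,\J]$, whose vanishing is equivalent to $S$ preserving the de Rham factors $V_\pm$; from $\nabla\J=0$ one gets $\nabla_X N=[(\nabla_X S),\J]$, an algebraic expression in $\theta,S,\J,g$. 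The goal of this step is to show that the technical hypothesis forces $N\equiv 0$, so that $S$ and $\J$ commute and $\T M$ refines into the common eigendistributions $W_{ij}\definedas T_i\cap V_j$, $i,j\in\{+,-\}$, each of which is then simultaneously $D$-parallel and, for $g$, totally geodesic or totally umbilical.

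With $S$ and $\J$ commuting, I would analyse the at most four summands $W_{ij}$ and the distribution of $\theta$ among them, using the umbilicity relations together with $\delta^g\theta=0$ and compactness to constrain the mean-curvature components. The dichotomy then comes from the curvature $R^D$ of the Weyl connection, which is reducible with respect to $T_+\oplus T_-$. If $D$ is flat, the argument recalled in the Introduction applies: by Fried's theorem \cite{Frie} the manifold is conformally flat. If $R^D\neq 0$, the nonvanishing reducible curvature together with the parallel factor coming from $\J$ forces exactly one of the summands $W_{ij}$ to vanish and $\theta$ to be the differential of a function constant along the complementary factor; writing out the metric in adapted coordinates gives precisely the triple-product normal form $g=e^{2f}g_1+g_2+g_3$ with $f\in C^\infty(M_1\times M_2)$, the $\J$-splitting separating the factor $M_3$ and $T_+=TM_1$.

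The main obstacle is the middle step, namely proving $N=[S,\J]=0$: a priori the $D$-parallel distributions $T_\pm$ need not be compatible with the de Rham factors $V_\pm$, and the off-diagonal blocks of $S$ satisfy only a first-order system coupled to $\theta$. Controlling these blocks is exactly where the technical assumption enters, and it is also the place where one must integrate over the compact manifold in order to exclude intermediate configurations in which all four summands $W_{ij}$ are simultaneously nonzero. Once commutation is known, the final separation between the conformally flat and the triple-product branches is comparatively soft, being governed by whether the curvature of $D$ degenerates completely or retains the single nonflat factor responsible for the warping.
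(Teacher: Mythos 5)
The statement you set out to prove is a \emph{conjecture}: the paper does not prove it unconditionally, and your outline does not either. The paper's actual result (Theorem \ref{mainthm}) obtains the triple-product conclusion only under one of six extra hypotheses (i)--(vi), precisely because the step you yourself flag as ``the main obstacle'' --- showing $N=[S,\J]=0$ and, in addition, that one eigendistribution of $S$ is contained in an eigendistribution of $\J$ (both are needed for Proposition \ref{proptriple}) --- is not known to follow from Hypothesis \ref{hypo} alone. Your plan reaches the correct identity for $\nabla S$ (it is \eqref{derivS}) and the correct target, but offers no mechanism to close the gap: differentiating blockwise only gives $\nabla_XN=[\nabla_XS,\J]$, an algebraic expression in $\theta,S,\J$, from which $N\equiv0$ does not follow pointwise, and you state the conclusion of the integration step without producing the inequality that would drive it. The paper's mechanism is to apply the curvature to $S$, use $(\J\cdot\J)\circ R=R$, take traces to get the scalar identity \eqref{traceidentity2}, and integrate to get $\int_M(A_+\|\theta_+\|^2+A_-\|\theta_-\|^2)\vol^g=0$ with $A_\pm\ge0$; even this only works when condition $(*)$ holds, which is exactly where assumptions (i)--(vi) enter. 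Without such an input, configurations in which all four blocks $W_{ij}$ are nonzero are not excluded.

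Two further concrete problems. First, you cannot ``normalise by fixing the Gauduchon metric'': the hypothesis is that the \emph{given} metric $g$ has reducible holonomy, and replacing $g$ by a conformally rescaled metric destroys $\nabla\J=0$; moreover even $\delta^g\theta=0$ would not yield condition $(*)$, since the relevant quantity in \eqref{eqstokes} is $\delta(S\theta)\cdot\tr(S\J)$, not $\delta\theta$. Second, your final dichotomy is miscalibrated: the conformally flat branch of the conjecture comes from \emph{closed} Weyl structures (the flat, exact and LCP cases already settled in \cite{Frie}, \cite{m2019}, \cite{bfm2023}), whereas your assertion that for $R^D\neq0$ ``the nonvanishing reducible curvature forces exactly one of the summands $W_{ij}$ to vanish and $\theta$ to be the differential of a function constant along the complementary factor'' is unproven and is precisely the content requiring the integral formula and the rank-$1$ versus rank-$\ge2$ analysis of Section 6 of the paper.
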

	
	Note first that by \cite[Thm. 6.3]{m2019}, Conjecture \ref{c} is true when $D$ is exact, i.e. when $D$ is the Levi-Civita connection of some metric on $M$ which is globally conformal to $g$. The conjecture was also proved when $D$ is closed but non-exact. Indeed, conformal product structures with closed non-exact and non-flat Weyl connection are exactly the so called locally conformally product (LCP) structures \cite{f2024}.  The study of LCP manifolds started with a construction given by V. Matveev and Y. Nikolayevsky \cite{mn2015} as a counterexample to a conjecture by Belgun and the first author \cite{bm2016}. A structure theorem was first obtained in the analytic case by V. Matveev and Y. Nikolayevsky \cite{mn2017} and afterwards extended to the smooth case by Kourganoff \cite{k2019} (see also \cite{fz2025} for another proof). In the LCP setting, Conjecture \ref{c} follows from \cite[Thm. 4.7 and Rem. 4.11]{bfm2023}. 
	
	In this paper we make further progress towards the proof of the conjecture in the remaining case, where $D$ is non-closed, by showing that it holds true under any of the following extra assumptions:
	\begin{enumerate}[label=(\roman*)]
			\item $S\theta=\varepsilon\theta$, for some $\varepsilon\in\{ -1,1\}$; this is equivalent to saying that $\theta$ vanishes on vectors from one of the two distributions of the conformal product.
			\item $\delta S\theta=\varepsilon\delta\theta$, for some $\varepsilon\in\{ -1,1\}$; this amounts to saying that the restriction of $\theta$ to one of the two distributions of the conformal product is co-closed.
			\item $S\J=\J S$; this commutation is equivalent to the fact that the tangent bundle of $M$ decomposes in an orthogonal direct sum of distributions $\T M=T_{1,1}\oplus T_{1,2}\oplus T_{2,1}\oplus T_{2,2}$ such that $T_{1,1}\oplus T_{1,2}$ and $T_{2,1}\oplus T_{2,2}$ are $D$-parallel and $T_{1,1}\oplus T_{2,1}$ and $T_{1,2}\oplus T_{2,2}$ are $\nabla$-parallel. 
			\item $\tr (S\J)$ is constant.
			\item $\delta(S\theta)=0$.
			\item $\tr(\J)=0$; this is equivalent to the fact that $\T M$ has a $\nabla$-parallel splitting in an orthogonal direct sum of two distributions of the same dimension.
		\end{enumerate}	
		Note that conditions (i)-(iv) are automatically satisfied on a triple product. Moreover, one has the obvious implications (i)$\Rightarrow$(ii) and (iii)$\Rightarrow$(iv).
	
	Our main result is the following:
	\begin{thm} \label{mainthm}
		Let $(M,g)$ be an oriented, compact connected Riemannian manifold of dimension $n\ge 3$ endowed with a Weyl connection~$D$, whose Lee form $\theta$ with respect to $g$ is not identically zero. We assume that both the Levi-Civita connection of $g$ and the Weyl connection $D$ have reducible holonomy, and that at least one of the conditions (i)-(vi) above is satisfied. 
		Then $(M,g)$ is a triple product.		
	\end{thm}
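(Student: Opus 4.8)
The plan is to extract from the condition $DS=0$ a pointwise formula for $\nabla S$, and then to use it, together with $\nabla\J=0$ and the compactness of $M$, to organize the tangent bundle into three mutually compatible distributions. First I would substitute \eqref{ko} into $DS=0$ to obtain, writing $\theta^\sharp$ for the metric dual of $\theta$,
\begin{equation*}
(\nabla_X S)Y=\theta(Y)SX-\theta(SY)X+g(SX,Y)\theta^\sharp-g(X,Y)S\theta^\sharp .
\end{equation*}
Decomposing $\theta^\sharp=\xi_1+\xi_2$ along $\T M=T_1\oplus T_2$, this equation shows at once that the two distributions $T_1,T_2$ of the conformal product are totally umbilical, with mean curvature vectors $\xi_2$ and $\xi_1$ respectively; in particular $T_i$ is totally geodesic precisely when $\theta^\sharp$ lies in $T_i$. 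A short trace computation using $\nabla\J=0$ then yields the key identity
\begin{equation*}
\di\bigl(\tr(S\J)\bigr)=2\,g\bigl([S,\J]\theta^\sharp,\,\cdot\,\bigr),
\end{equation*}
so that condition (iv) is equivalent to the pointwise relation $[S,\J]\theta^\sharp=0$, which (iii) trivially implies.

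Next I would carry out the reduction of the six hypotheses to two pointwise model cases: either $S\theta=\varepsilon\theta$ (condition (i)), or $S\J=\J S$ (condition (iii)). Here compactness enters decisively: the idea is to integrate the divergences of $\theta^\sharp$ and $S\theta^\sharp$—whose codifferentials appear in (ii) and (v)—against the umbilicity data above, and to combine this with the identity for $\di\tr(S\J)$, so that the integral conditions (ii),(v) and the trace conditions (iv),(vi) each force one of the two alignment relations to hold everywhere. The expectation is that, after these reductions, it suffices to treat the two clean cases (i) and (iii).

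In the commuting case (iii) the endomorphisms $S$ and $\J$ are simultaneously diagonalizable, giving an orthogonal splitting $\T M=T_{11}\oplus T_{12}\oplus T_{21}\oplus T_{22}$ into joint eigenspaces; since $\J$ is $\nabla$-parallel, the sums $T_{11}\oplus T_{21}$ and $T_{12}\oplus T_{22}$ are $\nabla$-parallel, so $(M,g)$ is locally a Riemannian product, and one then reads off from the umbilic $D$-structure that the conformal factor depends on only two of the factors, producing a triple product (with the fourth joint eigenspace behaving exactly as the empty one on a genuine triple product). In the aligned case (i), say $S\theta=-\theta$, the totally geodesic factor $T_2$ contains $\theta^\sharp$, and the finer $\nabla$-parallel distribution $\J$ is expected to split $T_2$ into the part carrying $\theta$ and a $\nabla$-parallel complement; identifying these with $T_{M_2}$ and $T_{M_3}$ and recovering the warping function $f$ from $\theta$ then assembles the local isometry with $e^{2f}g_1+g_2+g_3$.

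The main obstacle I anticipate is the reduction step: controlling the interaction of the two parallel involutions—$S$, parallel only for the Weyl connection $D$, and $\J$, parallel for the Levi-Civita connection $\nabla$—when one is given only an integral or trace hypothesis rather than a pointwise one. This requires the right Bochner-type integrated identities on the compact manifold, followed by a de Rham-type argument (on the universal cover) to convert the resulting parallel orthogonal splitting into the claimed metric product. Verifying that the two distinct holonomy reductions are genuinely compatible, so that $S$, $\J$ and $\theta$ can be diagonalized against a single product decomposition, is where the real work lies.
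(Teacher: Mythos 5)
Your first-order computations are correct and do appear in the paper: the formula you write for $(\nabla_XS)Y$ is exactly the paper's Equation \eqref{derivS} expanded, and your identity $\di(\tr(S\J))=2g([S,\J]\theta^\sharp,\cdot)$ is Equation \eqref{dtrace}. But the core of the argument is missing. The paper does \emph{not} reduce conditions (i)--(vi) to the two pointwise cases (i) and (iii); it shows (Lemma \ref{lemma*}) that each of them implies only the single \emph{integral} condition $(*)$, namely $\tr(\J)\int_M\langle\theta,\J\theta-S\J S\theta\rangle\,\vol^g=0$. The decisive step is then a second-order identity: one computes $R_{X,Y}S$ from \eqref{derivS}, exploits the curvature symmetry $R\circ(\J\cdot\J)=R$ coming from $\nabla\J=0$ (Lemma \ref{lemmaRSJ}), and takes traces to get a scalar identity with a divergence term (Proposition \ref{propscalarformula}). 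Integrating under $(*)$ yields $\int_M(A_+\|\theta_+\|^2+A_-\|\theta_-\|^2)\vol^g=0$ with $A_\pm\ge 0$ by Cauchy--Schwarz, and the vanishing of $A_\pm$ is shown to be equivalent to the pointwise eigenspace inclusion $E_{\mp}(S)\subseteq E_{\varepsilon}(\J)$ (Theorem \ref{thma+a-}). Nothing in your proposal produces this curvature-level input, and without it the umbilicity data and the trace identity alone cannot force any alignment between the $D$-parallel involution $S$ and the $\nabla$-parallel involution $\J$ — which, as you yourself note at the end, is ``where the real work lies.''

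Two further concrete gaps. First, even your ``clean'' cases are not clean: under (i) alone there is no a priori relation between the $\nabla$-parallel splitting and the $D$-parallel one, and the paper still needs the full integral machinery to prove $S\J=\J S$ in that case; under (iii) alone, simultaneous diagonalization gives \emph{four} joint eigenspaces, and the triple-product conclusion (Proposition \ref{proptriple}) requires in addition that one of them vanishes, i.e.\ $E_{\varepsilon}(S)\subseteq E_{\varepsilon'}(\J)$ — your remark that the fourth block ``behaves exactly as the empty one'' is precisely the statement that must be proved, and it is established in the paper only via the integral formula (Theorem \ref{thmcond*}(ii)). Second, the case where the conformal product has rank $1$ breaks the equivalence characterizing the vanishing of $A_\pm$ (one of them vanishes identically), and the paper needs a separate argument there using the commutation of $\Ric$ with $\J$ and maximum-principle reasoning on $\theta(\xi)$ and $\theta(\J\xi)$; your proposal does not address this case at all.
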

	
	The proof is based on an integral formula, somewhat similar to the one used in the proof of \cite[Thm. 4.1]{mp2025}. The main idea is to exploit the fact that $S$ is $D$-parallel in order to obtain a formula for the Riemannian curvature tensor $R$ acting on $S$, then to use the symmetries of $R$ given by the holonomy reduction, together with some suitable traces, in order to obtain a scalar identity involving a divergence term, and some further terms whose sign can be controlled. 
	
	However, unlike the proof of \cite[Thm. 4.1]{mp2025}, where the Kähler symmetries of $R$ together with the Cauchy-Schwarz inequality give a definite sign in the corresponding scalar identity, the fact that the holonomy reduction in the present case is determined by an orthogonal involution instead of an orthogonal complex structure, gives some extra terms in the scalar identity whose sign can not be controlled, unless we make one of the assumptions (i)-(vi) above.
	
	The structure of the paper is the following. After some preliminaries on Weyl connections and conformal product structures, we introduce triple products in Section 3, and give their tensorial characterization in Proposition \ref{proptriple}. In Section 4 we obtain the scalar identity mentioned above (Proposition \ref{propscalarformula}), and in Section 5 we prove that the coefficients involved in it are non-negative provided that one of the assumptions (i)-(vi) holds. Finally, in Section 6 we prove our main theorem, by distinguishing the cases where one of the $D$-parallel distributions has rank 1 or both have rank at least 2, and in the Appendix we prove several divergence formulas used in the proof of Proposition \ref{propscalarformula}.
	
	{\bf Acknowledgment.} This work was partly supported by the PNRR-III-C9-2023-I8 grant CF 149/31.07.2023 {\em Conformal Aspects of Geometry and Dynamics}.
	
	\section{Definitions and notation}

Let $(M,g)$ be a Riemannian manifold of dimension $n\ge 2$. We denote by $\nabla$ the Levi-Civita connection of $g$ and by $\sharp:\T^*M\to \T M$ and $\flat:\T M\to \T^* M$ the musical isomorphisms defined by $g$, which are $\nabla$-parallel and inverse to each other. In order to simplify notation, we will sometimes simply write $\langle\cdot,\cdot\rangle$ instead of the metric $g$, and denote the associated norm by $\|\cdot\|$. Furthermore we will identify vector fields with their dual $1$-forms with respect to $g$, when there is no risk of confusion.

We introduce the following notation: If $F$ is a non-trivial orthogonal  involution of $\T M$, then $F$ has exactly two eigenvalues, $-1$ and $1$, and we denote by $E_{+}(F)$ the eigenspace of  $F$ corresponding to the eigenvalue $1$ and by $E_{-}(F)$ the eigenspace of  $F$ corresponding to the eigenvalue $-1$. By a slight abuse of notation we denote these eigenspaces as $E_{\varepsilon}(F)$, for $\varepsilon\in\{-1,1\}$.  On a Riemannian manifold $(M,g)$, an endomorphism  $F$  of $\T M$ is called symmetric if $F$ is self-adjoint with respect to the metric $g$, \emph{i.e.} $g(FX, Y)=g(X, FY)$, for all vector fields $X,Y$. Clearly, an involution is symmetric if and only if it is an orthogonal endomorphism of $(\T M, g)$.

\begin{remark}\label{remgred}
	If $(M,g)$ is a Riemannian manifold with Levi-Civita connection $\nabla$, then the following assertions are equivalent:
	\begin{enumerate}[label=(\roman*)]
		\item The metric $g$ has reducible holonomy.
		\item There is an orthogonal non-trivial $\nabla$-parallel decomposition of the tangent bundle.
		\item There exists a $\nabla$-parallel $g$-orthogonal involution $\J\in\mathrm{End}(\T M)$ different from $\pm \Id_{\T M}$.
	\end{enumerate}
For the equivalence between $(ii)$ and $(iii)$ it suffices to notice that every orthogonal splitting \mbox{$\T M=D_1\oplus D_2$} is equivalent to an orthogonal involution $\J$ by defining $E_+(\J):=D_1$ and $E_-(\J):=D_2$. Moreover, $D_1$ and $D_2$ are $\nabla$-parallel if and only if $\J$ is $\nabla$-parallel.
\end{remark}

	\begin{defn}\label{weyl} A {\em Weyl connection} on $(M,g)$ is a torsion-free linear connection $D$  satisfying $Dg=-2\theta\otimes g$ for some $1$-form $\theta\in\Omega^1(M)$, called the {\em Lee form} of $D$ with respect to $g$. 
	\end{defn}
	
   The conformal Koszul formula  \cite{g1995} shows that $D$ is completely determined by its Lee form with respect to $g$:
	\begin{equation}\label{ko} D_XY=\nabla_XY+\theta(Y)X+\theta(X)Y-\langle X, Y\rangle\theta^\sharp,\qquad\forall X,Y\in \Gamma(\T M).
	\end{equation}
	
	\begin{defn}\label{cp} 
		A {\em conformal product structure} on $(M,g)$ is a Weyl connection $D$ together with a decomposition of the tangent bundle of $M$ as  \mbox{$\T M=T_1\oplus T_2$}, where $T_1$ and $T_2$ are orthogonal $D$-parallel non-trivial distributions. The {\em rank} of a conformal product structure is defined to be the smallest of the ranks of the two distributions $T_1$ and $T_2$. A conformal product structure $D$ on $(M,g)$ is {\em orientable} if the $D$-parallel distributions $T_1$ and $T_2$ are orientable.
	\end{defn}

Let us recall the following tensorial characterization of conformal product structures (\emph{cf.} \cite[Lemma~2.4]{mp2025}): 

\begin{remark}\label{remcps}
	On a Riemannian manifold $(M, g)$  with Levi-Civita connection $\nabla$ the following assertions are equivalent:
	\vspace{-0.2cm}
	\begin{enumerate}[label=(\roman*)]
		\item There exists a conformal product structure on $(M,g)$.
		\item There exists a  $g$-orthogonal involution $S$ of $\T M$ different from $\pm \Id_{\T M}$ and a $1$-form $\theta$ on $M$, such that 
		\begin{equation}\label{derivS}
			\nabla_X S=SX\odot \theta^\sharp - S\theta^\sharp\odot X, \quad \forall X\in\Gamma(\T M),
		\end{equation}
		\end{enumerate}	
where for any two vectors $X, Y\in\T M$ the symmetric endomorphism $X\odot Y$ is defined by
$$(X\odot Y)(Z):=\langle X,Z\rangle Y+\langle Y, Z\rangle X,\qquad\forall Z\in\T M.$$
More precisely, the orthogonal involution $S$ of $\T M$ is defined by declaring $E_+(S)=T_1$ and $E_-(S)=T_2$.
The Lee form $\theta$ decomposes accordingly as $\theta=\theta_++\theta_-$, where 
\begin{equation}\label{decompth}
\theta_+:=\frac{1}{2}(\theta+S\theta), \quad \theta_-:=\frac{1}{2}(\theta-S\theta).
\end{equation}
\end{remark}

For later use, we recall the action of the Riemannian curvature tensor on $S$ obtained in \cite[Equality (7)]{mp2025}, which follows in a straightforward way from \eqref{derivS} applied twice to vector fields $X,Y\in\Gamma(\T M)$:
\begin{equation}\label{RS}
	\begin{split}
		R_{X,Y} S	=&SY \odot  TX-SX \odot  TY+STY\odot X-STX\odot Y\\
		&+\<\theta, Y\>(SX\odot \theta-S\theta \odot X)+\<\theta, X\>(S\theta \odot Y-SY\odot \theta)\\
		& -\|\theta\|^2(SX\odot Y-SY\odot X).
	\end{split}	
\end{equation}	

Our framework throughout the paper is summarized in the following:

\begin{hypothesis}\label{hypo}
$(M,g)$ is an oriented, compact, connected $n$-dimensional Riemannian manifold, and $D$ is a Weyl connection different from the Levi-Civita connection $\nabla$ of $g$, such that:
\vspace{-0.2cm}
\begin{itemize}
\item  The Weyl connection $D$ has reducible holonomy, so it defines a conformal product structure, and $S$ denotes the corresponding $D$-parallel orthogonal involution given by Remark~\ref{remcps}.
\item The metric $g$ has reducible holonomy and $P$ denotes the corresponding $\nabla$-parallel orthogonal involution given by Remark~\ref{remgred}.
\end{itemize}
\end{hypothesis}

Let us denote by $\theta$ the Lee form of $D$ with respect to $g$ (which is not identically zero by assumption), and by \mbox{$T:=\nabla\theta$} its covariant derivative. We will identify $T$ with the corresponding endomorphism of $\T M$ using the metric $g$.

The exterior 2-form $\alpha\in\Omega^2(M)$, identified with the skew-symmetric endomorphism of  $\T M$ defined by
\begin{equation}\label{defalpha}
\alpha:=S\J-\J S
\end{equation}
will be of particular interest in the sequel. We derive some of its basic properties in the next result:

\begin{lemma}\label{lemmaalpha}
	The following identities hold:
	\begin{equation}\label{derivalpha}
		\nabla_X\alpha= \J S X\wedge\theta+\J \theta\wedge SX-\J X\wedge S\theta-\J S\theta\wedge X, \qquad \forall X\in \T M.
	\end{equation}	
	\begin{equation}\label{diffalpha}
		\di \alpha=-\alpha\wedge\theta,
	\end{equation}	
	\begin{equation}\label{codiffalpha}
		\delta\alpha=(1-n)\J S\theta-S\J \theta-\tr(\J S)\theta+\tr(\J)S\theta+\tr(S)\J\theta,
	\end{equation}	
	\begin{equation}\label{dtrace}
		\di(\tr(S\J))=2\alpha(\theta).
	\end{equation}	
\end{lemma}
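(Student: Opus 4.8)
The four identities all follow by direct computation from the two structural facts at our disposal, namely that $\J$ is $\nabla$-parallel and that $S$ obeys the first-order equation \eqref{derivS}. The plan is to establish \eqref{derivalpha} first, since the remaining three identities are obtained from it by taking, respectively, the exterior derivative, the codifferential, and a trace.

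For \eqref{derivalpha}, since $\nabla\J=0$ we have $\nabla_X\alpha=(\nabla_XS)\J-\J(\nabla_XS)$, so substituting \eqref{derivS} reduces everything to understanding how the symmetric endomorphisms $SX\odot\theta$ and $S\theta\odot X$ compose with $\J$ on either side. Writing $A\otimes B$ for the rank-one endomorphism $Z\mapsto\langle A,Z\rangle B$, one has $A\odot B=A\otimes B+B\otimes A$ and $A\wedge B=A\otimes B-B\otimes A$; using that $\J$ is symmetric, each of the four compositions $(SX\odot\theta)\J$, $(S\theta\odot X)\J$, $\J(SX\odot\theta)$, $\J(S\theta\odot X)$ becomes a sum of two rank-one terms with $\J$ absorbed into one factor. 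Collecting the eight resulting terms in pairs, each pair assembles into a wedge $A\wedge B$, and one recovers exactly the four wedge terms on the right-hand side of \eqref{derivalpha}. This is the computational heart of the lemma, and the place where the sign and ordering bookkeeping must be done most carefully.

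I would then obtain \eqref{diffalpha} and \eqref{codiffalpha} from \eqref{derivalpha} via the standard frame formulas $\di\alpha=\sum_i e_i\wedge\nabla_{e_i}\alpha$ and $\delta\alpha=-\sum_i e_i\lrcorner\nabla_{e_i}\alpha$, for a local orthonormal frame $\{e_i\}$. For \eqref{diffalpha}, wedging $e_i$ into the four terms of $\nabla_{e_i}\alpha$ and summing, the terms involving $\J S\theta\wedge e_i$ and $\J e_i\wedge S\theta$ drop out, the former because $e_i$ then appears twice, the latter because $\sum_i e_i\wedge\J e_i=0$ since $\J$ is symmetric; similarly $\sum_i e_i\wedge Se_i=0$ kills the $\J\theta\wedge Se_i$ term. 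The only survivor is $\bigl(\sum_i e_i\wedge\J Se_i\bigr)\wedge\theta$, and recognizing $\sum_i e_i\wedge\J Se_i=-\alpha$ (again using symmetry of $S$ and $\J$, together with $\J S-S\J=-\alpha$) gives precisely $-\alpha\wedge\theta$. For \eqref{codiffalpha}, I contract $e_i$ into each of the four terms using $e_i\lrcorner(a\wedge b)=\langle e_i,a\rangle b-\langle e_i,b\rangle a$ and sum; the scalar contractions $\sum_i\langle e_i,e_i\rangle=n$, $\sum_i\langle e_i,\J Se_i\rangle=\tr(\J S)$, $\sum_i\langle e_i,Se_i\rangle=\tr(S)$, $\sum_i\langle e_i,\J e_i\rangle=\tr(\J)$ produce the coefficients, while the vector contractions such as $\sum_i\langle e_i,\theta\rangle\J Se_i=\J S\theta$ produce the vector terms; gathering the $\J S\theta$ contributions yields the coefficient $(1-n)$, and the remaining trace and vector terms assemble into the stated right-hand side.

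Finally, \eqref{dtrace} follows by the same mechanism applied to the function $\tr(S\J)$: since $\nabla\J=0$, one has $X(\tr(S\J))=\tr\bigl((\nabla_XS)\J\bigr)$, and substituting \eqref{derivS} and using $\tr(A\otimes B)=\langle A,B\rangle$ together with the symmetry of $S$ and $\J$ gives $X(\tr(S\J))=2\langle X,(S\J-\J S)\theta\rangle=2\langle X,\alpha\theta\rangle$, i.e. $\di(\tr(S\J))=2\alpha(\theta)$. None of the four steps presents a genuine conceptual difficulty; the only real obstacle is keeping the numerous sign, ordering, and convention choices consistent throughout — in particular the identification of skew-symmetric endomorphisms with $2$-forms and the interior/exterior product conventions — which is why I would fix the $\otimes$-notation once at the outset and derive all four identities from \eqref{derivalpha} within that single framework.
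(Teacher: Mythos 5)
Your proposal is correct and follows essentially the same route as the paper: \eqref{derivalpha} is obtained by substituting \eqref{derivS} into $\nabla_X\alpha=(\nabla_XS)\J-\J(\nabla_XS)$ and regrouping rank-one terms into wedges, \eqref{diffalpha} and \eqref{codiffalpha} then follow from the frame formulas $\di\alpha=\sum_i e_i\wedge\nabla_{e_i}\alpha$ and $\delta\alpha=-\sum_i e_i\lrcorner\nabla_{e_i}\alpha$ with exactly the cancellations and trace contractions you describe, and \eqref{dtrace} comes from tracing $(\nabla_XS)\J$. All the sign and convention bookkeeping you flag (in particular $\sum_i e_i\wedge\J Se_i=-\alpha$ and the coefficient $(1-n)$ on $\J S\theta$) matches the paper's computation.
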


\begin{proof}
	The identities follow by straightforward computation. For any vector fields $X,Y$, we obtain, using the fact that $\J$ is $\nabla$-parallel and the covariant derivative of $S$ is explicitly given by \eqref{derivS}:
	\begin{equation*}
		\begin{split}
			(\nabla_X\alpha)(Y)=& (\nabla_X(S\J-\J S))(Y)=(\nabla_X S)\J Y-\J(\nabla_X S)(Y)\\
			=&(SX\odot \theta - S\theta\odot X)\J Y-\J(SX\odot \theta - S\theta\odot X)(Y)\\
			=&\<SX, \J Y\>\theta +\theta(\J Y)SX - \J S\theta (Y)X-\<X, \J Y\>S\theta\\
			&-\<SX, Y\>\J \theta - \theta(Y)\J S X+S\theta(Y)\J X+\<X, Y\>\J S\theta\\
			=&(\J S X\wedge\theta+\J \theta\wedge SX-\J X\wedge S\theta-\J S\theta\wedge X)(Y),
		\end{split}
	\end{equation*}
	which proves \eqref{derivalpha}. If $\{e_i\}_{i=\overline{1,n}}$ is a local orthonormal basis of $\T M$, then we compute:
	\begin{equation*}
		\begin{split}
			\di \alpha&=\sum_{i=1}^n e_i\wedge \nabla_{e_i}\alpha\overset{\eqref{derivalpha}}{=}\sum_{i=1}^n e_i\wedge \left(\J S e_i\wedge\theta+\J \theta\wedge Se_i-\J e_i\wedge S\theta-\J S\theta\wedge e_i\right)\\
			&=\sum_{i=1}^n e_i\wedge \J S e_i\wedge\theta=-\alpha\wedge\theta,
		\end{split}
	\end{equation*}
	\begin{equation*}
		\begin{split}
			\delta\alpha&=-\sum_{i=1}^n e_i\lrcorner \nabla_{e_i}\alpha\overset{\eqref{derivalpha}}{=}-\sum_{i=1}^n e_i\lrcorner \left(\J S e_i\wedge\theta+\J \theta\wedge Se_i-\J e_i\wedge S\theta-\J S\theta\wedge e_i\right)\\
			&=-\tr(\J S)\theta+\J S\theta-S\J\theta+\tr(S)\J\theta+\tr(\J)S\theta-\J S\theta+(1-n)\J S\theta,
		\end{split}
	\end{equation*}
	yielding \eqref{diffalpha} and \eqref{codiffalpha}. In order to obtain \eqref{dtrace}, we compute for any vector field $X$ taking the orthonormal basis $\{e_i\}_{i=\overline{1,n}}$ to be parallel at the point where the computation is done:
	\begin{equation*}
		\begin{split}
			\di(\tr(S\J))(X)&=X(\tr(S\J))=\sum_{i=1}^n X(\<S\J e_i, e_i\>)=\sum_{i=1}^n X(\<\J S e_i, e_i\>)=\sum_{i=1}^n \<\J (\nabla_X S) e_i, e_i\>)\\
			&\overset{\eqref{derivS}}{=}\sum_{i=1}^n \<(SX\odot \theta - S\theta\odot X)(e_i) , \J e_i\>)\\
			&=\sum_{i=1}^n \left(\<SX,e_i\>\<\theta, \J e_i\>+\theta(e_i)\<SX, \J e_i\> - \<S\theta, e_i\>\<X, \J e_i\>-\<X, e_i\>\<S\theta, \J e_i\>\right)\\
			&=\<SX,\J\theta\>+\<\J SX,\theta\>-\<S\theta,\J X\>-\<X,\J S\theta\>=2\<\alpha(\theta), X\>.
		\end{split}
	\end{equation*}
\end{proof}

\section{Triple products}

We start with the following construction:
\begin{example}\label{etp}
	Let $(M_i, g_i)$, for $i\in\{1,2,3\}$, be three Riemannian manifolds and consider a function   $f\in\mathcal{C}^\infty(M_1\times M_2)$. On the product $M:=M_1\times M_2\times M_3$ we define the Riemannian metric $g:=e^{2f} g_1+g_2+g_3$. We remark that the manifold $M$ is endowed simultaneously with a Riemannian metric and with a Weyl connection, both having reducible holonomy, i.e. satisfying Hypothesis \ref{hypo}:
	\begin{enumerate}[label=(\roman*)]
		\item The Riemannian manifold $(M,g)$ is the Riemannian product  of $(M_1\times M_2,e^{2f} g_1+g_2)$ with $(M_3,g_3)$, so the holonomy of $g$ is reducible.
		\item The $1$-form $\theta:=-\di_2 f$ defines a Weyl connection $D$ on $(M,g)$, which together with the orthogonal $D$-parallel decomposition $\T M= \T M_1\oplus (\T M_2\oplus \T M_3)$ defines a conformal product structure on $(M,g)$, meaning that the holonomy of $D$ is reducible (cf. \cite{bm2011} or \cite[Prop. 3.2]{mp2025}). Here $\di_2$ denotes the differential of functions on $M$ along $M_2$. 
	\end{enumerate}	
\end{example}

The previous example motivates the following:

\begin{defn}\label{deftp}
	A Riemannian manifold $(M,g)$ is called a {\it triple product} if the tangent bundle of $M$ decomposes into an orthogonal direct sum of three integrable distributions \mbox{$\T M=T_1\oplus T_2\oplus T_3$}, such that every point of $M$ has a neighbourhood $U$ which can be written as $M_1\times M_2\times M_3$, with $T_i=\T M_i$ along $U$, and the metric $g$ restricted to $U$ takes the form $e^{2f} g_1+g_2+g_3$, where $g_i$ is a Riemannian metric on $M_i$, for $i\in\{1,2,3\}$, and $f\in\mathcal{C}^\infty(M_1\times M_2)$.
\end{defn}	

\begin{remark}
	This definition generalizes the notion of triple warped product introduced in~\cite{m2019}, where the manifold is considered to be a global product of three manifolds and, in the notation of  Definition~\ref{deftp}, the function $f$ is assumed to be a function on $M_2$ alone.
\end{remark}	

We now provide a tensorial characterization of triple products.
\begin{prop}\label{proptriple}
	Let $M$ be a manifold satisfying Hypothesis~\ref{hypo}. If the following two conditions are satisfied:
	\begin{enumerate}[label=(\roman*)]
		\item $S\J=\J S$,
		\item $E_{\varepsilon}(S)\subseteq E_{\varepsilon'}(\J)$, for some $\varepsilon, \varepsilon'\in\{-1,1\}$,
	\end{enumerate}	
	then $(M,g)$ is a triple product.
\end{prop}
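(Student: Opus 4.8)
The plan is to extract from conditions (i) and (ii) a splitting of $\T M$ into three integrable distributions, to locate the Lee form $\theta$ inside this splitting, and then to read off the local metric normal form from the second fundamental forms. By (i) the $g$-orthogonal involutions $S$ and $\J$ commute, hence are simultaneously diagonalizable, and $\T M$ is the orthogonal direct sum of their four joint eigenbundles. Replacing if necessary $S$ by $-S$ and/or $\J$ by $-\J$ -- both operations preserve Hypothesis~\ref{hypo}, since $-S$ satisfies \eqref{derivS} with the same $\theta$ and $-\J$ is again $\nabla$-parallel -- I may assume that the inclusion (ii) reads $E_+(S)\subseteq E_+(\J)$, equivalently $E_-(\J)\subseteq E_-(S)$. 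Then $E_+(S)\cap E_-(\J)=0$, and setting $T_1:=E_+(S)$, $T_2:=E_-(S)\cap E_+(\J)$, $T_3:=E_-(\J)$ gives an orthogonal splitting $\T M=T_1\oplus T_2\oplus T_3$, with $T_1\ne 0$ and $T_3\ne 0$ since $S,\J\ne\pm\Id$.

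Next I would locate $\theta$. Since $\alpha=S\J-\J S=0$, the left-hand side of \eqref{derivalpha} vanishes for every $X$, so $\J SX\wedge\theta+\J\theta\wedge SX-\J X\wedge S\theta-\J S\theta\wedge X=0$. Writing $\theta=a+b+c$ with $a\in T_1$, $b\in T_2$, $c\in T_3$, the vectors $S\theta=a-b-c$, $\J\theta=a+b-c$ and $\J S\theta=a-b+c$ are immediate, and evaluating the displayed identity on $X\in T_1$ makes all terms cancel except a nonzero multiple of $X\wedge c$; as $T_1\ne 0$ and $T_1\perp T_3$ this forces $c=0$. Evaluating instead on $X\in T_3$ leaves only a multiple of $X\wedge a$, whence $a=0$. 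Thus $\theta=b$, i.e. $S\theta=-\theta$ and $\J\theta=\theta$, so $\theta^\sharp\in T_2$; in particular $T_2\ne 0$ because $\theta\not\equiv 0$.

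With $\theta^\sharp\in T_2$, formula \eqref{derivS} becomes $\nabla_X S=SX\odot\theta^\sharp+\theta^\sharp\odot X$, and the remaining computations are routine. All three distributions are integrable: $E_\pm(S)$ are involutive since $D$ is torsion-free and they are $D$-parallel, $E_\pm(\J)$ are involutive since $\nabla$ is torsion-free and they are $\nabla$-parallel, and for $X,Y\in\Gamma(T_2)$ one has $[X,Y]=D_XY-D_YX\in E_-(S)$ and $[X,Y]=\nabla_XY-\nabla_YX\in E_+(\J)$, so $[X,Y]\in T_2$. Because $\J$ is $\nabla$-parallel, $E_+(\J)=T_1\oplus T_2$ and $E_-(\J)=T_3$ are parallel, so $(M,g)$ is locally a Riemannian product $N\times M_3$ with $\T N=E_+(\J)$, $g=g_N+g_3$, and $N$ totally geodesic (whence its intrinsic connection is the restriction of $\nabla$). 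Inside $N$ I would compute second fundamental forms: for $X,Y\in\Gamma(T_1)$, differentiating $SY=Y$ gives $(\Id-S)\nabla_XY=(\nabla_XS)Y=2\langle X,Y\rangle\theta^\sharp$, so the $E_-(S)$-component of $\nabla_XY$ equals $\langle X,Y\rangle\theta^\sharp\in T_2$ and (since $\nabla_XY\in E_+(\J)$) there is no $T_3$-component; hence $T_1$ is totally umbilic with mean curvature $\theta^\sharp\in T_2$. For $X,Y\in\Gamma(T_2)$ the same differentiation of $SY=-Y$ yields $(\nabla_XS)Y=0$, so $(\Id+S)\nabla_XY=0$ and $\nabla_XY\in E_-(S)\cap E_+(\J)=T_2$; hence $T_2$ is totally geodesic in $N$.

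Finally I would recognize the metric. On $(N,g_N)$ the complementary integrable distributions $T_1$ (totally umbilic) and $T_2$ (totally geodesic) place $N$, by the local recognition theorem for twisted products (Ponge--Reckziegel), in the form of a twisted product $M_2\times_\lambda M_1$ with $\T M_1=T_1$, $\T M_2=T_2$ and $g_N=g_2+\lambda^2 g_1$ for some positive $\lambda\in\mathcal{C}^\infty(M_1\times M_2)$. Putting $f:=\log\lambda$ and recombining with the splitting $M=N\times M_3$ yields $g=e^{2f}g_1+g_2+g_3$ on $M_1\times M_2\times M_3$ with $T_i=\T M_i$, which is exactly the definition of a triple product (Definition~\ref{deftp}); the umbilicity relation $(\nabla_XY)^{T_2}=\langle X,Y\rangle\theta^\sharp$ for $X,Y\in\Gamma(T_1)$ moreover identifies $\theta$ with $-\di_2 f$, though this is not needed since Definition~\ref{deftp} only constrains the metric. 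The main obstacle is precisely this last step: passing from the infinitesimal data (integrability together with the umbilic and totally geodesic second fundamental forms) to the local metric normal form requires the twisted-product recognition theorem, and one must keep $\lambda$ allowed to depend on \emph{both} factors, so that $(N,g_N)$ is a genuine twisted -- not merely warped -- product, in accordance with $f\in\mathcal{C}^\infty(M_1\times M_2)$ in Definition~\ref{deftp}.
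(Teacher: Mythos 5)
Your argument is correct, and while the first half (the sign normalization, the three-fold orthogonal splitting $\T M=E_+(S)\oplus(E_+(\J)\cap E_-(S))\oplus E_-(\J)$ and its integrability) coincides with the paper's, the second half takes a genuinely different route. The paper never locates $\theta$; instead it superposes two local normal forms of the metric — the de Rham splitting $g=g_{12}+g_3$ coming from $\J$ and the conformal-product normal form $g=e^{f_1}h_1+e^{f_{23}}h_{23}$ coming from $S$ (quoting \cite[Prop.~3.2]{mp2025}) — and differentiates their difference along $M_3$ to pin down which variables the conformal factors may depend on. You instead extract from $\alpha=0$ and \eqref{derivalpha} the pointwise fact $\theta^\sharp\in T_2$ (your evaluation giving $4\,X\wedge c=0$ for $X\in T_1$ and $4\,X\wedge a=0$ for $X\in T_3$ checks out, and $T_1,T_3\ne 0$ since $S,\J\ne\pm\Id$), then compute second fundamental forms from \eqref{derivS} and invoke the Ponge--Reckziegel recognition theorem for twisted products. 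Both proofs outsource the final ``infinitesimal data $\Rightarrow$ metric normal form'' step to an external result (the paper to \cite[Prop.~3.2]{mp2025}, you to Ponge--Reckziegel), and you are right that one must allow the twisting function to depend on both factors. Your route has the small bonus of making explicit the identities $S\theta=-\theta$ and $\J\theta=\theta$ (hence $T_2\ne 0$), which the paper's proof leaves implicit; the paper's route avoids any curvature/second-fundamental-form computation by working directly with coordinate expressions of the metric.
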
	

\begin{proof}
	Up to changing the signs of $S$ and $\J$ if necessary, we may assume without loss of generality that $\varepsilon=\varepsilon'=1$ in $(ii)$, i.e. that $E_{+}(S)\subseteq E_{+}(\J)$. Since $S$ and $\J$ are orthogonal involutions, the inclusion $E_{+}(S)\subseteq E_{+}(\J)$ implies that also the inclusion $E_{-}(\J)\subseteq E_{-}(S)$ holds. Moreover, since $S$ and $\J$ commute, $S$ preserves $E_{+}(\J)$, which thus decomposes as $E_{+}(\J)=E_{+}(S)\oplus (E_{+}(\J)\cap E_{-}(S)).$
	We thus have the following orthogonal direct sum decompositions of the tangent bundle:
	$$\T M=E_{+}(\J)\oplus E_{-}(\J) =E_{+}(S)\oplus (E_{+}(\J)\cap E_{-}(S))\oplus E_{-}(\J)=E_{+}(S)\oplus E_{-}(S).$$
Because both the Levi-Civita and the Weyl connection $D$ are torsion-free, the three distributions $E_{+}(S)$,  $E_{+}(\J)\cap E_{-}(S)$, and $E_{-}(\J)$ are integrable. Consequently, every point of $M$ has a neighbourhood $U$ which can be written as the product of three connected manifolds $M_1\times M_2\times M_3$, such that $\T M_1= E_{+}(S)$, $\T M_2=E_{+}(\J)\cap E_{-}(S)$ and $\T M_3= E_{-}(\J)$.

Now, because $\J$ is the orthogonal involution defined by the holonomy reduction of the metric $g$, the restriction of $g$ to this neighbourhood can be written as 
	\begin{equation}\label{gred}
		g=g_{12}+g_3,
	\end{equation}
	where $g_{12}$ is a metric on $M_1\times M_2$ and  $g_3$ is a metric on $M_3$.
	Furthermore, because $S$ is the orthogonal involution corresponding to the conformal product structure, after shrinking $M_1$, $M_2$ and $M_3$ if necessary, the restriction of $g$ to the neighbourhood $U$ can also be expressed as
	\begin{equation}\label{gcps}
		g=e^{f_1}h_{1}+e^{f_{23}}h_{23},
	\end{equation}
	where $f_1, f_{23}\in\mathcal{C}^\infty(M_1\times M_2\times M_3)$, $h_1$ is a metric on $M_1$ and $h_{23}$ is a metric on $M_2\times M_3$ (cf. \cite[Prop. 3.2]{mp2025}).  From the expressions \eqref{gred} and \eqref{gcps} of the metric $g$, we obtain that the metric $h_{23}$ can be decomposed as $h_{23}(y,z)=h_2(y,z)+h_3(y,z)$, where $h_2(\cdot ,z)$ is a metric on $M_2\times\{z\}$  for every $z\in M_3$ and $h_3(y,\cdot)$ is a metric on $M_3$  for every $y\in M_2$. It follows in particular that $g_3(z)=e^{f_{23}(x,y,z)}h_3(y,z)$ for every $(x,y,z)\in M_1\times M_2\times M_3$, hence $f_{23}\in\mathcal{C}^\infty(M_2\times M_3)$. If we further compare the restrictions of \eqref{gred} and \eqref{gcps}  to $M_1\times M_2$ we obtain that
	\begin{equation}\label{g12}
		g_{12}(x,y)=e^{f_1(x,y,z)}h_1(x)+e^{f_{23}(y,z)}h_{2}(y,z),\qquad\forall (x,y,z)\in M_1\times M_2\times M_3.
	\end{equation}
	Differentiating the Equality \eqref{g12} in the direction of a vector field $Z\in\Gamma (\T M_3)$, we obtain:
	\begin{equation}\label{g12diff}
		0=Z(f_1)e^{f_1}h_1+\mathcal{L}_Z(e^{f_{23}}h_{2}),
	\end{equation}
	where  $Z(f_1)e^{f_1}h_1$ belongs to $\Sym^2(M_1)$ and $\mathcal{L}_Z(e^{f_{23}}h_{2})$ belongs to $\Sym^2(M_2\times M_3)$, implying thus that each of the terms  in the right-hand side of \eqref{g12diff} has to vanish. Thus $Z(f_1)=0$, for all vector fields $Z$ tangent to $M_3$, implying that $f_1\in\mathcal{C}^\infty(M_1\times M_2)$, and also $\mathcal{L}_Z(e^{f_{23}}h_{2})=0$ for all $Z\in\Gamma (\T M_3)$, which implies that $e^{f_{23}}h_{2}=:g_2$ is a metric on $M_2$ which does not depend on $M_3$. Altogether, we have showed that the metric $g$ can be written as follows:
	$$g=e^{f_1}g_1+g_2+g_3,$$
	with $g_1:=h_1$. By the above considerations, $g_i$ is a metric on $M_i$, for $i\in\{1,2,3\}$, and $f_1\in\mathcal{C}^\infty(M_1\times M_2)$, so $(M, g)$ is a triple product.
\end{proof}	

\section{A scalar identity for conformal product structures on reducible manifolds}

The purpose of this section is to prove the following result:
\begin{prop}\label{propscalarformula}
	Let $M$ be a manifold satisfying Hypothesis~\ref{hypo}. Then the following identity holds:
	\begin{equation}\label{traceidentity2}
		\begin{split}
			0=&\frac12\|\theta\|^2(n^2+(\tr(S))^2-(\tr(\J))^2-(\tr(S\J))^2+\tr(S\J S\J)-n)\\
			&+\<\theta, \J\theta-S\J S\theta\>\tr(\J)-\<\theta, S\theta\>(n\tr(S)-\tr(\J)\tr(S\J))	+\delta(\beta),
		\end{split}	
	\end{equation}
	where $\beta:=\tr(S) S\theta-(n+1)\theta-\tr(S\J)\J S\theta+\J S\J S\theta+\tr(U)\J \theta\in \Omega^1(M)$.
\end{prop}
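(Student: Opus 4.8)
The plan is to build the scalar identity \eqref{traceidentity2} by contracting the curvature identity \eqref{RS} against the parallel involution $\J$ over an orthonormal frame, and then comparing this explicit evaluation with the value obtained from the left-hand side once the symmetries forced by the holonomy reduction of $g$ are used. Before contracting I would record the structural facts that make the bookkeeping tractable: tracing the derivative of $S$ gives $\nabla_X\tr(S)=2\langle SX,\theta\rangle-2\langle S\theta,X\rangle=0$, so $\tr(S)$ is constant (and $\tr(\J)$ is constant since $\nabla\J=0$); since $\J$ is $\nabla$-parallel the Riemannian curvature commutes with it, $R_{X,Y}\J=\J R_{X,Y}$, i.e. $R_{X,Y}$ preserves the eigendistributions of $\J$; and the purely algebraic identity $\tr(S\J S\J)-n=-\tfrac12\|\alpha\|^2$ (immediate from $\alpha=S\J-\J S$ and $S^2=\J^2=\Id$) explains the degree-four trace appearing in \eqref{traceidentity2}. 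Lemma \ref{lemmaalpha} is kept at hand throughout.

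The first evaluation is the direct one. I would form a scalar by contracting \eqref{RS} with $S$ and $\J$ inserted into the frame vectors, schematically $\sum_{i,j}\langle (R_{e_i,e_j}S)(S\J e_i),\J e_j\rangle$ together with the companion contraction obtained by exchanging the roles of the two slots. Evaluating this from the explicit right-hand side of \eqref{RS}: the last, $\|\theta\|^2$-term produces exactly the degree-four combination $n^2+(\tr S)^2-(\tr\J)^2-(\tr(S\J))^2+\tr(S\J S\J)-n$; the two middle $\langle\theta,\cdot\rangle$-lines produce the terms $\langle\theta,\J\theta-S\J S\theta\rangle\tr(\J)$ and $-\langle\theta,S\theta\rangle(n\tr(S)-\tr(\J)\tr(S\J))$; and the first line, which carries $T=\nabla\theta$, produces a remainder that is linear in $T$ (a combination of traces of the form $\tr(S\cdots T)$).

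The second evaluation is where the curvature is removed. Writing $R_{X,Y}S=[R_{X,Y},S]$, commuting $\J$ through $R$ via $R_{X,Y}\J=\J R_{X,Y}$, and applying the first Bianchi identity, the reducibility of $g$ is what forces the genuine curvature information to disappear: the pure scalar-curvature and the $S\J$-twisted curvature contractions cancel against one another, while the remaining Ricci-type contractions (always coupled to $S$) are eliminated by substituting the once-contracted form of \eqref{RS} — the trace of \eqref{RS} over one index pair, which already expresses every Ricci--$S$ contraction through $\theta$ and $T$. What survives is a single divergence. To finish I would equate the two evaluations and then convert the $T$-linear remainder of the first step into divergences plus algebraic terms: by \eqref{derivS} each trace $\tr(S\cdots T)$ equals $-\delta$ of a corresponding $1$-form plus an explicit algebraic expression in $\theta,S,\J$, and these are precisely the divergence formulas collected in the Appendix. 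Assembling all divergence contributions into the single term $\delta(\beta)$ and all algebraic contributions into the first two lines of \eqref{traceidentity2} (using that $\tr(S)$ is constant, the relation for $\tr(S\J S\J)-n$, and \eqref{codiffalpha}, \eqref{dtrace}) yields the claimed identity.

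The main obstacle is the second evaluation together with the final assembly. Organizing the contraction so that \emph{all} true curvature invariants cancel — this is exactly the step in which $\nabla\J=0$ is indispensable — and then matching the surviving derivative terms, after combining them with the $T$-linear terms, against one and the same $1$-form $\beta$, is delicate because $S$ is \emph{not} parallel: every commutation of $S$ past a covariant derivative spawns a $\nabla S$-term governed by \eqref{derivS}. Keeping track of these terms and reducing them consistently through the Appendix divergence identities, so that nothing but the stated algebraic terms and $\delta(\beta)$ remains, is where the real computational work lies.
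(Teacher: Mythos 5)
Your plan coincides with the paper's own proof: the paper evaluates $(S\cdot S)\circ R-R$ twice — once directly from \eqref{RS} (Lemma \ref{lemmaRSJ}) and once after composing with $\J\cdot \J$ using $\nabla\J=0$ (Corollary \ref{corol}) — equates the two traces via Lemma \ref{genlem}, converts every $T$-linear trace into a codifferential with the Appendix formulas, and treats the non-constant $\tr(S\J)$ through \eqref{dtrace}, exactly as you indicate. The only cosmetic difference is that the paper packages your double frame contraction into the operator $F\cdot G$ on $\Lambda^2(\T M)$ and its composition rule, so the curvature cancels at the operator level and neither the first Bianchi identity nor any Ricci-type contraction is ever needed.
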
	

This formula will be obtained roughly speaking  by taking traces in an identity obtained by applying the Riemannian curvature tensor of $g$ to $S$ and using its commutation with $\J$. In order to obtain this identity we start in a more general setting with the following definition:

\begin{definition}\label{defcdot}
	For any endomorphisms $F, G\in\mathrm{End}(\T M)$, we define $F\cdot G\in \mathrm{End}(\Lambda^2 (\T M))$ by the following formula:
$$(F\cdot G) (X\wedge Y):=\frac{1}{2}(F(X)\wedge G(Y)+G(X)\wedge F(Y)), \text { for all tangent vectors } X,Y.$$
\end{definition}

Note that if $F,G$ are symmetric endomorphisms of $\T M,g)$, then $F\cdot G$ is up to a constant factor exactly the Kulkarni-Nomizu product of $F$ and $G$. 
It follows directly from the above definition that $(F\cdot F)(X\wedge Y)=F(X)\wedge F(Y)$ and $F\cdot G=G\cdot F$, for all $F, G\in \mathrm{End}(\T M)$. We will need the following general identities:
\begin{lemma}\label{genlem}
	For any $F, G, F',G'\in\mathrm{End}(\T M)$ the following relations hold:
	\begin{enumerate}[label=(\roman*)]
	  \item $\tr(F\cdot G)=\displaystyle\frac{1}{2}\left(\tr(F)\tr(G) -\tr(F\circ G)\right).$
		\item $(F\cdot G)\circ (F'\cdot G')=\displaystyle\frac{1}{2} ((F\circ F')\cdot (G\circ G')+(G\circ F')\cdot(F\circ G')).$
		In particular, for $F'=G'$, we obtain:
 \begin{equation}\label{prod}
 (F\cdot G)\circ (F'\cdot F')=(F\circ F')\cdot (G\circ F').
 \end{equation}
 	\end{enumerate}	
\end{lemma}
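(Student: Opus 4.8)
The plan is to establish both identities by direct computation: part (ii) purely algebraically, and part (i) in a local orthonormal frame, using throughout the defining symmetrization of the dot product.

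For (ii) I would not need any metric or frame at all. Applying the definition of $F'\cdot G'$ to a decomposable bivector $X\wedge Y$ and then applying $F\cdot G$ to each of the two resulting wedge terms, expansion yields
$$(F\cdot G)\circ(F'\cdot G')(X\wedge Y)=\tfrac14\bigl(FF'X\wedge GG'Y+GG'X\wedge FF'Y+GF'X\wedge FG'Y+FG'X\wedge GF'Y\bigr).$$
I would then group the first and second terms, and the third and fourth terms, and recognize each group via the defining symmetrization as $2\bigl((F\circ F')\cdot(G\circ G')\bigr)(X\wedge Y)$ and $2\bigl((G\circ F')\cdot(F\circ G')\bigr)(X\wedge Y)$ respectively. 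Since decomposable bivectors span $\Lambda^2(\T M)$, this gives the general formula. The special case \eqref{prod} follows immediately by setting $F'=G'$ and invoking $A\cdot B=B\cdot A$ to collapse $\tfrac12\bigl((F\circ F')\cdot(G\circ F')+(G\circ F')\cdot(F\circ F')\bigr)$ to $(F\circ F')\cdot(G\circ F')$.

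For (i) I would compute the trace in an orthonormal basis $\{e_i\}$ of $\T M$, using that $\{e_i\wedge e_j\}_{i<j}$ is orthonormal in $\Lambda^2(\T M)$ for the induced inner product $\langle X_1\wedge X_2,Y_1\wedge Y_2\rangle=\langle X_1,Y_1\rangle\langle X_2,Y_2\rangle-\langle X_1,Y_2\rangle\langle X_2,Y_1\rangle$. Writing $F_{ab}=\langle Fe_a,e_b\rangle$ and similarly for $G$, expanding $\langle(F\cdot G)(e_i\wedge e_j),e_i\wedge e_j\rangle$ produces the summand $\tfrac12\bigl(F_{ii}G_{jj}+G_{ii}F_{jj}-F_{ij}G_{ji}-G_{ij}F_{ji}\bigr)$. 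The decisive observation is that this expression is symmetric in $i,j$ and vanishes when $i=j$, so the constrained sum $\sum_{i<j}$ can be replaced by $\tfrac12\sum_{i,j}$ over all pairs. The unconstrained double sum then factorizes, with $\sum_{i,j}F_{ii}G_{jj}=\tr(F)\tr(G)$ and $\sum_{i,j}F_{ij}G_{ji}=\tr(F\circ G)$ once the contraction is recognized as the trace of a matrix product; collecting the four contributions yields $\tfrac14\bigl(2\tr(F)\tr(G)-2\tr(F\circ G)\bigr)$, the claimed value.

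Both steps are essentially bookkeeping, so I do not anticipate a genuine obstacle. The only points needing minor care are fixing the normalization of the induced inner product on $\Lambda^2(\T M)$ and justifying the passage from $\sum_{i<j}$ to $\tfrac12\sum_{i,j}$ in (i) through the symmetry and diagonal-vanishing of the summand; part (ii) is entirely frame-free and requires only careful tracking of the four wedge terms and the two symmetrizations.
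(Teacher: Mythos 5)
Your proposal is correct and follows essentially the same route as the paper: part (ii) by direct expansion of the four wedge terms on decomposable bivectors, and part (i) by computing the trace in an orthonormal frame and converting $\sum_{i<j}$ into $\tfrac12\sum_{i,j}$ before recognizing $\tr(F)\tr(G)$ and $\tr(F\circ G)$. No gaps.
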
	
\begin{proof}
	$(i)$ If $\{e_i\}_{1\leq i\leq n}$ is a local orthonormal basis of $\T M$, then $\{e_i\wedge e_j\}_{1\leq i<j\leq n}$ is a local orthonormal basis of $\Lambda^2 (\T M)$, and the trace of the endomorphism $F\cdot G\in \mathrm{End}(\Lambda^2 (\T M))$ can be computed as follows:
	\begin{equation*}
		\begin{split}
			\tr(F\cdot G)&=\frac{1}{2}\sum_{i,j=1}^n \<(F\cdot G)(e_i\wedge e_j),e_i\wedge e_j\>=\frac{1}{4}\sum_{i,j=1}^n \<F(e_i)\wedge G(e_j)+G(e_i)\wedge F(e_j),e_i\wedge e_j\>\\
			&=\frac{1}{2}\sum_{i,j=1}^n \left(\<F(e_i), e_i\>\<G(e_j),e_j\>-\<F(e_i), e_j\>\<G(e_j),e_i\>\right)\\
			&=\frac{1}{2}\left(\tr(F)\tr(G)-\sum_{j=1}^n \<F(G(e_j)), e_j\>\right)=\frac{1}{2}\left(\tr(F)\tr(G) -\tr(F\circ G)\right).
		\end{split}	
	\end{equation*}
$(ii)$ We compute by definition, for any tangent vectors $X$ and $Y$:
\begin{equation*}
	\begin{split}
		4((F\cdot G)\circ (F'\cdot G'))(X\wedge Y)=&2(F\cdot G)(F'(X)\wedge G'(Y)+G'(X)\wedge F'(Y))\\
		=&F(F'(X))\wedge G(G'(Y))+G(F'(X))\wedge F(G'(Y))\\
		&+F(G'(X))\wedge G(F'(Y))+G(G'(X))\wedge F(F'(Y))\\
		=&2((F\circ F')\cdot (G\circ G')+(G\circ F')\cdot(F\circ G'))(X\wedge Y).
	\end{split}	
\end{equation*}
\end{proof}	

 We assume now that $M$ satisfies Hypothesis \ref{hypo} and determine the compositions of $\J\cdot \J$ and $S\cdot S$ with the Riemannian curvature tensor.

\begin{lemma}\label{lemmaRSJ}
	If $R\colon\Lambda^2 (\T M)\to \Lambda^2 (\T M)$ denotes the Riemannian curvature tensor of the metric $g$, seen as a symmetric endomorphism of  $ \Lambda^2 (\T M)$, then the following identities hold:
	\begin{equation}\label{commJR}
(\J\cdot \J)\circ R-R=0=R\circ (\J\cdot \J)-R,
	\end{equation}
	\begin{equation}\label{commSR}
	(S\cdot S)\circ R-R=-2ST\cdot S+2T\cdot I+2S\cdot(\theta\otimes S\theta)-2I\cdot(\theta\otimes \theta)+\|\theta\|^2(I\cdot I-S\cdot S),
\end{equation}
where $T:=\nabla\theta$ and $I:=\Id_{\T M}$ denotes the identity endomorphism of $\T M$.
\end{lemma}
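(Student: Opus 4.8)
My plan is to prove both identities by transporting the operator $R\in\mathrm{End}(\Lambda^2(\T M))$ back to the pointwise curvature endomorphisms $R_{X,Y}$. I use the standard identification $\langle R(X\wedge Y),Z\wedge W\rangle=\langle R_{X,Y}Z,W\rangle$, together with $\langle X\wedge Y,Z\wedge W\rangle=\langle X,Z\rangle\langle Y,W\rangle-\langle X,W\rangle\langle Y,Z\rangle$. The key remark is that for any $g$-orthogonal (hence symmetric) endomorphism $F$ one has $(F\cdot F)(Z\wedge W)=FZ\wedge FW$ and $F\cdot F$ is self-adjoint, so that
\[
\langle ((F\cdot F)\circ R)(X\wedge Y),Z\wedge W\rangle=\langle R(X\wedge Y),FZ\wedge FW\rangle=\langle F R_{X,Y} F\,Z,W\rangle .
\]
Everything then reduces to computing the conjugated endomorphism $F R_{X,Y} F$.

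For \eqref{commJR} I take $F=\J$. Since $\J$ is $\nabla$-parallel, the Ricci identity gives $R_{X,Y}\circ\J=\J\circ R_{X,Y}$, and as $\J$ is a symmetric involution this yields $\J R_{X,Y}\J=R_{X,Y}$. By the displayed formula, $\langle ((\J\cdot \J)\circ R)(X\wedge Y),Z\wedge W\rangle=\langle R_{X,Y}Z,W\rangle=\langle R(X\wedge Y),Z\wedge W\rangle$ for all $X,Y,Z,W$, hence $(\J\cdot \J)\circ R=R$. Taking adjoints, and using that both $R$ and $\J\cdot \J$ are self-adjoint, then gives $R\circ (\J\cdot \J)=R$ as well.

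For \eqref{commSR} I take $F=S$, which is no longer parallel; the displayed formula now reads $\langle ((S\cdot S)\circ R-R)(X\wedge Y),Z\wedge W\rangle=\langle (SR_{X,Y}S-R_{X,Y})Z,W\rangle$. Here I would use the action of the curvature on the endomorphism $S$: writing $\Phi_{X,Y}$ for the right-hand side of \eqref{RS}, one has $\Phi_{X,Y}=R_{X,Y}\circ S-S\circ R_{X,Y}$. Composing on the left with $S$ and using $S^2=\Id$ gives the clean identity $SR_{X,Y}S-R_{X,Y}=S\circ\Phi_{X,Y}$. Moreover $S$ anticommutes with $\Phi_{X,Y}$ (again because $S^2=\Id$), so $S\circ\Phi_{X,Y}$ is skew-symmetric and the resulting pairing is indeed compatible with $Z\wedge W$. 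It thus remains to show that the quadruple form $(X,Y,Z,W)\mapsto\langle S\,\Phi_{X,Y}\,Z,W\rangle$ equals the lowered version of the claimed combination of $\cdot$-products.

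The final step, which I expect to be the main computational obstacle, is to substitute \eqref{RS} for $\Phi_{X,Y}$, compose with $S$ on the left while simplifying each $\odot$-term through $S(SV)=V$, and match the outcome against the Kulkarni--Nomizu forms $\langle (F\cdot G)(X\wedge Y),Z\wedge W\rangle$. Concretely, the four $T$-linear terms of \eqref{RS} produce exactly $-2\,ST\cdot S+2\,T\cdot I$, the terms weighted by $\langle\theta,\cdot\rangle$ produce $2\,S\cdot(\theta\otimes S\theta)-2\,I\cdot(\theta\otimes\theta)$, and the $\|\theta\|^2$-term produces $\|\theta\|^2(I\cdot I-S\cdot S)$, with the convention $(\theta\otimes\eta)(Z)=\langle\theta,Z\rangle\eta$. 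No new idea is required beyond the reduction above; the care lies entirely in tracking signs and the symmetrizations built into the $\cdot$-product and into the wedge pairing.
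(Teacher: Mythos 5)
Your proposal is correct and follows essentially the same route as the paper: both prove \eqref{commJR} by pairing with $\J Z\wedge \J W$ and using $\nabla\J=0$ plus self-adjointness of $R$ and $\J\cdot\J$, and both prove \eqref{commSR} by reducing to $S\circ(R_{X,Y}S)$ (your $S\circ\Phi_{X,Y}$) via \eqref{RS} and regrouping the terms into the stated $\cdot$-products, with the groupings you claim matching the paper's computation exactly.
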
	

\begin{proof}
	Since $\J$ is a $\nabla$-parallel symmetric involution of $\T M$ and $\J\cdot\J$ is symmetric with respect to the induced metric on $\Lambda^2 (\T M)$, we obtain for all tangent vector fields $X,Y,Z,W$:
	\begin{equation*}
		\begin{split}
			\<((\J\cdot \J)\circ R)(X\wedge Y), Z\wedge W\>&=\<R(X\wedge Y), (\J\cdot \J)(Z\wedge W)\>=\<R(X\wedge Y), \J Z\wedge \J W\>\\
			&=\<R_{X,Y}\J Z, \J W\>\overset{\nabla\J=0}{=}\<\J R_{X,Y} Z, \J W\>\\
			&=\<R_{X,Y}Z, W\>=\<R(X\wedge Y), Z\wedge W\>,
		\end{split}	
	\end{equation*}
yielding the first equality in \eqref{commJR}. The second equality  in \eqref{commJR} then follows by taking the transpose of the first equality, since both $R$ and $\J\cdot \J$ are symmetric endomorphisms.

In order to prove \eqref{commSR}, we start similarly by computing for all tangent vector fields $X,Y,Z,W$:
\begin{equation}\label{ssr}
\begin{split}
		\<((S\cdot S)\circ R)(X\wedge Y), Z\wedge W\>&=\<R(X\wedge Y), (S\cdot S)(Z\wedge W)\>=\<R_{X,Y}S Z, S W\>\\
		&=\<(R_{X,Y}S) Z, S W\>+\<S R_{X,Y}Z, S W\>\\
		&=\<(S\circ R_{X,Y}S )Z, W\>+\<R(X\wedge Y), Z\wedge W\>.
\end{split}		
\end{equation}
Using \eqref{RS} composed with $S$ on the left and regrouping the terms in order to use the operation introduced in Definition~\ref{defcdot}, we obtain:
\begin{equation*}
	\begin{split}
		S\circ R_{X,Y} S	=&SY \otimes  STX+TX\otimes Y -SX \otimes STY-TY\otimes X+  STY\otimes SX+X\otimes TY\\
		&-STX\otimes SY-Y\otimes TX+\<\theta, Y\>(SX\otimes S\theta+\theta\otimes X-S\theta \otimes SX + X\otimes \theta)\\
		&+\<\theta, X\>(S\theta \otimes SY+Y\otimes \theta-SY\otimes S\theta+\theta\otimes Y)\\
		& -\|\theta\|^2(SX\otimes SY+Y\otimes X-SY\otimes SX- X\otimes Y)\\
		=&-STX\wedge SY- SX\wedge STY+TX\wedge Y+X\wedge TY+\<\theta, Y\>(SX\wedge S\theta+\theta\wedge X)\\
		&+\<\theta, X\>(S\theta \wedge SY+Y\wedge \theta) -\|\theta\|^2(SX\wedge SY-X\wedge Y)\\
		=&- 2 (ST\cdot S)(X\wedge Y)+2 (T\cdot I) (X\wedge Y)+SX\wedge (\theta\otimes S\theta)(Y)-X\wedge (\theta\otimes\theta)(Y)\\
		&+(\theta\otimes S\theta)(X)\wedge SY- (\theta\otimes \theta)(X)\wedge Y+\|\theta\|^2(I\cdot I-S\cdot S)(X\wedge Y)\\
		=&( -2 ST\cdot S+2 T\cdot I +2S\cdot (\theta\otimes S\theta)-2I\cdot (\theta\otimes\theta)+\|\theta\|^2(I\cdot I-S\cdot S))(X\wedge Y),
	\end{split}	
\end{equation*}	
which together with \eqref{ssr} yields \eqref{commSR}.
\end{proof}	

With the same notation as in Lemma \ref{lemmaRSJ} we may now formulate the following direct consequence:

\begin{corollary}\label{corol}
The endomorphism $(S\cdot S)\circ R-R$ can be expressed as follows:
\begin{equation}\label{eqSSR}
	\begin{split}
		(S\cdot S)\circ R-R\overset{\eqref{commJR}}{=}&((S\cdot S)\circ R-R) \circ (\J\cdot \J)\\
		\overset{\eqref{commSR}, \eqref{prod}}{=}&-2ST\J\cdot S\J+2T\J \cdot \J+2S\J\cdot(\J\theta\otimes S\theta)\\
		&-2\J\cdot(\J\theta\otimes \theta)+\|\theta\|^2(\J\cdot \J-S\J\cdot S\J).
	\end{split}	
\end{equation}
\end{corollary}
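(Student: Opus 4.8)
The plan is to derive \eqref{eqSSR} purely algebraically, by combining the two facts already established: that the curvature is invariant under pre- and post-composition with $\J\cdot\J$ (equation \eqref{commJR}, which encodes that $\J$ is $\nabla$-parallel), and the multiplicativity identity \eqref{prod}, which allows one to push a factor $\J\cdot\J$ through any product $F\cdot G$.

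First I would establish the first equality claimed in \eqref{eqSSR}. The second identity in \eqref{commJR} reads $R\circ(\J\cdot \J)=R$, so post-composing the operator $(S\cdot S)\circ R-R$ with $\J\cdot\J$ leaves it unchanged:
$$((S\cdot S)\circ R-R)\circ(\J\cdot\J)=(S\cdot S)\circ R\circ(\J\cdot\J)-R\circ(\J\cdot\J)=(S\cdot S)\circ R-R.$$

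Next I would substitute the explicit form of $(S\cdot S)\circ R-R$ given by \eqref{commSR} into this expression and distribute the composition with $\J\cdot\J$ over the five summands. Each summand has the shape $(F\cdot G)\circ(\J\cdot\J)$, and the special case \eqref{prod} of Lemma~\ref{genlem}$(ii)$, taken with $F'=G'=\J$, gives $(F\cdot G)\circ(\J\cdot\J)=(F\circ\J)\cdot(G\circ\J)$. Applying this termwise, and using $I\circ\J=\J$, handles the summands $-2ST\cdot S$, $2T\cdot I$, and $\|\theta\|^2(I\cdot I-S\cdot S)$, producing $-2ST\J\cdot S\J$, $2T\J\cdot\J$, and $\|\theta\|^2(\J\cdot\J-S\J\cdot S\J)$ respectively. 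For the two remaining summands, involving $\theta\otimes S\theta$ and $\theta\otimes\theta$, I would use that $\J$ is $g$-symmetric to compute, for any $1$-form $\psi$, the composition $(\theta\otimes\psi)\circ\J=\J\theta\otimes\psi$, since $((\theta\otimes\psi)\circ\J)(Z)=\langle\theta,\J Z\rangle\psi=\langle\J\theta,Z\rangle\psi$ for all tangent vectors $Z$; this turns $2S\cdot(\theta\otimes S\theta)$ into $2S\J\cdot(\J\theta\otimes S\theta)$ and $-2I\cdot(\theta\otimes\theta)$ into $-2\J\cdot(\J\theta\otimes\theta)$. Summing the five contributions yields exactly \eqref{eqSSR}.

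Since everything reduces to the formal identity \eqref{prod} together with the parallelism and symmetry of $\J$, there is no genuine obstacle here; the only points requiring care are the convention $(\theta\otimes\psi)(Z)=\langle\theta,Z\rangle\psi$, which is what forces the factor $\J$ to land on $\theta$ (producing $\J\theta$) rather than on $\psi$, and the bookkeeping of which endomorphism plays the role of $F$ and which plays the role of $G$ in each product $F\cdot G$.
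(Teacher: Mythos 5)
Your proposal is correct and follows exactly the route the paper takes: the first equality comes from $R\circ(\J\cdot\J)=R$ in \eqref{commJR}, and the second from substituting \eqref{commSR} and pushing $\J\cdot\J$ through each summand via \eqref{prod}, with the convention $(\theta\otimes\psi)(Z)=\langle\theta,Z\rangle\psi$ correctly forcing $(\theta\otimes\psi)\circ\J=\J\theta\otimes\psi$. Nothing is missing.
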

From the above formulas we obtain:

\noindent{\bf Proof of Proposition~\ref{propscalarformula}.}
	Taking the trace in both expressions obtained for the endomorphism $(S\cdot S)\circ R-R$, namely in \eqref{commSR} and \eqref{eqSSR}, and applying Lemma~\ref{genlem} (i), yields after some simplification:
	\begin{equation}\label{traceidentity0}
		\begin{split}
			&-\tr(ST)\tr(S) +n\tr(T)+\tr(S)\<\theta, S\theta\>+\frac12\|\theta\|^2(n^2-2n-(\tr(S))^2)=\\
			&-\tr(\J ST) \tr(S\J)+\tr(\J S\J S T)+\tr(\J T)\tr(\J)-\tr(T)+\tr(S\J)\<\J\theta, S\theta\>\\
			&-\<\J\theta, S\J S\theta\>
			-\tr(\J)\<\theta, \J \theta\>+\frac12\|\theta\|^2(2+(\tr(\J))^2-n-(\tr(S\J))^2+\tr(S\J S\J)).
		\end{split}	
	\end{equation}
	We first express separately the term $\tr(\J ST)\tr(S\J)$ occuring in this equality, because the trace of the endomorphism $S\J$ is  not necessarily constant. For this, we start by computing the following codifferential:
	\begin{equation}\label{noncttr}
		\begin{split}
			-\delta(\tr(S\J)\J S\theta)&=\ -\tr(S\J)\delta(\J S\theta)+\<\J S\theta, \di(\tr(S\J))\>\\
			&\!\!\overset{\eqref{dtrace}}{=}-\tr(S\J)\delta(\J S\theta)+2\<\J S\theta, (S\J-\J S)\theta\>\\
			&=\ -\tr(S\J)\delta(\J S\theta)+2\<S\J S\J \theta, \theta\>+2\|\theta\|^2,
		\end{split}
	\end{equation}
	which together with the computations in the Appendix yields:
	\begin{equation}\label{noncttr1}
		\begin{split}
			&\tr(\J ST)\tr(S\J)\overset{\eqref{trJST}}{=}(-\delta(\J S\theta)-\|\theta\|^2\tr(S\J)+\<\theta, S\theta\>\tr(\J))\tr(S\J)\\
			&\overset{\eqref{noncttr}}{=} -\delta(\tr(S\J)\J S\theta)-2\<S\J S\J \theta, \theta\>+2\|\theta\|^2-\|\theta\|^2(\tr(S\J))^2+\<\theta, S\theta\>\tr(\J)\tr(S\J).
		\end{split}
	\end{equation}
If we replace the traces computed in the Appendix, Lemma \ref{traces}, as well as the identity \eqref{noncttr1}, in Equality~\eqref{traceidentity0}, we then obtain:
	\begin{equation}\label{traceidentity1}
		\begin{split}
			&\|\theta\|^2(\tr(S))^2-n\<\theta, S\theta\>\tr(S)+\<\theta, S\theta\>\tr(S)+\frac12\|\theta\|^2(n^2-2n-(\tr(S))^2)\\
			&-2\<S\J S\J\theta, \theta\>+2\|\theta\|^2-\|\theta\|^2(\tr(S\J))^2+\<\theta, S\theta\>\tr(\J)\tr(S\J) \\
			& -\|\theta\|^2(1-\tr(\J S\J S))+\<\theta, \J S\J S\theta\>\\
			&+\<\J \theta, S\theta\>\tr(\J S)-\<\theta, S\J S\theta\>\tr (\J)-\<\theta, S\theta\>\tr(S)\\
			&-\tr(S\J)\<\J\theta, S\theta\>+\<\theta, S\J S\J\theta\>+\tr(\J)\<\theta, \J\theta\>\\
			&-\frac12\|\theta\|^2(2+(\tr(\J))^2-n-(\tr(S\J))^2+\tr(S\J S\J))+\delta(\beta)=0,
		\end{split}	
	\end{equation}
	where $\beta:=\tr(S) S\theta-(n+1)\theta-\tr(S\J)\J S\theta+\J S\J S\theta+\tr(\J)\J \theta$. Regrouping the terms of the same kind, Equation~\eqref{traceidentity1} yields \eqref{traceidentity2} and finishes the proof of Proposition~\ref{propscalarformula}.\qed

\section{An integral formula}

We assume throughout this section that $M$ satisfies Hypothesis \ref{hypo}. In order to state the next results, we introduce the following  two functions on $M$:
\begin{equation}\label{defa+}
	A_+:=n^2+(\tr(S))^2-(\tr(\J))^2-(\tr(S\J))^2+\tr(S\J S\J)-n-2n\tr(S)+2\tr(\J)\tr(S\J),
\end{equation}
\begin{equation}\label{defa-}
	A_-:=n^2+(\tr(S))^2-(\tr(\J))^2-(\tr(S\J))^2+\tr(S\J S\J)-n+2n\tr(S)-2\tr(\J)\tr(S\J).
\end{equation}

\begin{remark}\label{a+a-const}
Let us notice that in general the functions $A_+$ and $A_-$ are not constant on $M$, because although involutions of $\T M$ have constant trace on $M$, the traces of $S\J$  and $S\J S\J$ are not necessarily constant. However, if  $S$ and $\J$ commute, then $A_+$ and $A_-$ are constant, because in this case $S\J$ is an involution itself, so $\tr(S\J)$ is constant, and $S\J S\J=I$, so $\tr(S\J S\J)=n$.
\end{remark}	

We denote by $\vol^g$ the volume form of $(M,g)$ and consider the following condition, which will be referred to in the sequel as the condition~$(*)$:
\begin{equation}
	\displaystyle\tr(\J) \cdot \int_M\<\theta, \J\theta-S\J S\theta\>\vol^g=0\tag{$\mathcal{*}$}.
\end{equation}
Let us first show that this condition holds under various assumptions:
\begin{lemma} \label{lemma*}
	Each of the following assumptions:
	\begin{enumerate}[label=(\roman*)]
		\item $S\theta=\varepsilon\theta$, for some $\varepsilon\in\{-1,1\}$,
		\item $\delta S\theta=\varepsilon\delta\theta$,  for some $\varepsilon\in\{-1,1\}$,
		\item $S\J=\J S$,
		\item $\tr (S\J)$ is constant,
		\item $\delta(S\theta)=0$,
		\item $\tr(\J)=0$,
	\end{enumerate}	
	implies that condition $(*)$ is satisfied.
\end{lemma}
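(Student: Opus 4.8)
The plan is to reduce condition $(*)$ to a single integral identity and then dispatch the six cases from it. First I would observe that the integrand in $(*)$ can be rewritten entirely in terms of the two-form $\alpha=S\J-\J S$ from \eqref{defalpha}. Using that $S$ is a $g$-orthogonal involution (hence self-adjoint with $S^2=\Id$), one computes pointwise
$$\langle S\theta,\alpha\theta\rangle=\langle S\theta,S\J\theta\rangle-\langle S\theta,\J S\theta\rangle=\langle\theta,\J\theta\rangle-\langle\theta,S\J S\theta\rangle=\langle\theta,\J\theta-S\J S\theta\rangle,$$
so the integrand equals $\langle S\theta,\alpha(\theta)\rangle$. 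Invoking \eqref{dtrace}, namely $\di(\tr(S\J))=2\alpha(\theta)$, this is $\tfrac12\langle S\theta,\di(\tr(S\J))\rangle$. Since $M$ is closed, integrating by parts (the codifferential $\delta$ being the formal adjoint of $\di$) yields the master identity
$$\int_M\langle\theta,\J\theta-S\J S\theta\rangle\,\vol^g=\frac12\int_M\tr(S\J)\,\delta(S\theta)\,\vol^g.$$

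With this identity in hand, most cases are immediate. Under (iii), $S\J=\J S$ forces $S\J S=\J$, so $\alpha=0$ and the integrand vanishes pointwise; under (vi), the prefactor $\tr(\J)$ in $(*)$ vanishes outright. For (v), the right-hand side of the master identity is zero since $\delta(S\theta)=0$. For (iv), $\tr(S\J)$ is constant and can be pulled out, leaving $\tfrac12\tr(S\J)\int_M\delta(S\theta)\,\vol^g=0$, because the integral of a codifferential over a closed manifold vanishes. In each of these cases either the full integral $\int_M\langle\theta,\J\theta-S\J S\theta\rangle\,\vol^g$ or its prefactor is zero, so $(*)$ holds.

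The only case needing an extra idea is (ii), which also covers (i) via the implication (i)$\Rightarrow$(ii). Here $\delta(S\theta)=\varepsilon\delta\theta$, so the master identity gives $\tfrac{\varepsilon}{2}\int_M\tr(S\J)\,\delta\theta\,\vol^g$. I would then integrate by parts once more and reuse \eqref{dtrace}:
$$\int_M\tr(S\J)\,\delta\theta\,\vol^g=\int_M\langle\di(\tr(S\J)),\theta\rangle\,\vol^g=2\int_M\langle\alpha(\theta),\theta\rangle\,\vol^g=0,$$
the last equality holding pointwise since $\alpha$ is skew-symmetric. Hence the integral vanishes and $(*)$ follows. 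The main obstacle is really the observation in the first paragraph: recognizing that the integrand equals $\langle S\theta,\alpha\theta\rangle$ and therefore, via \eqref{dtrace}, is half the pairing of $S\theta$ with an \emph{exact} one-form. Once this is seen, every case collapses to a one-line integration by parts, with (ii) requiring only the additional skew-symmetry of $\alpha$.
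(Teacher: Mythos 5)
Your proof is correct and follows essentially the same route as the paper: rewrite the integrand as $\<S\theta,\alpha(\theta)\>$, use \eqref{dtrace} to recognize it as half the pairing with the exact form $\di(\tr(S\J))$, integrate by parts, and dispatch the cases. The only (harmless) variation is in case (iv), where the paper notes that $\alpha(\theta)=\tfrac12\di(\tr(S\J))=0$ makes the integrand vanish pointwise, while you instead pull the constant $\tr(S\J)$ out of the integral and use $\int_M\delta(S\theta)\,\vol^g=0$; both are valid.
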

\begin{proof}
	It is clear that $(i)$ implies $(ii)$ and that $(iii)$ implies $(iv)$. It is also obvious that $(vi)$ implies condition~$(*)$.\\
	If $(iv)$ holds, then the endomorphism $\alpha$ defined by~\eqref{defalpha}~satisfies $\displaystyle\alpha(\theta)\overset{\eqref{dtrace}}{=}\frac{1}{2}\di(\tr(S\J))=0$, so the integrand in condition $(*)$ vanishes:  $\<\theta, \J\theta-S\J S\theta\>=\<S\theta, \alpha(\theta)\>=0$.\\
 Using Stokes' Theorem we compute:
		\begin{equation}\label{eqstokes}
		\begin{split}
			\int_M\<\theta, \J\theta-S\J S\theta\>\vol^g&=\int_M\<S\theta, \alpha(\theta)\>\vol^g\overset{\eqref{dtrace}}{=}\frac{1}{2}\int_M\<S\theta, \di(\tr(S\J))\>\vol^g\\
			&=\frac{1}{2}\int_M\delta(S\theta)\cdot \tr(S\J)\vol^g.
		\end{split}	
	\end{equation}
	If $(v)$ holds, then \eqref{eqstokes} yields condition $(*)$.
	Finally, if $(ii)$ holds, then we compute further in \eqref{eqstokes} to obtain:
	\begin{equation*}
		\begin{split}
		\int_M\<\theta, \J\theta-S\J S\theta\>\vol^g&=\frac{1}{2}\int_M\delta(S\theta)\cdot \tr(S\J)\vol^g\overset{(ii)}{=}\frac{1}{2}\int_M\delta(\theta)\cdot \tr(S\J)\vol^g\\
		&=\frac{1}{2}\int_M\<\theta, \di(\tr(S\J))\>\vol^g\overset{\eqref{dtrace}}{=}\int_M\<\theta, \alpha(\theta))\vol^g=0,
	\end{split}	
	\end{equation*}
where the last equality follows from the fact that $\alpha$ is skew-symmetric.
\end{proof}	

\begin{lemma}\label{lemmaintegral}
If the condition $(*)$ is satisfied and  $\theta=\theta_++\theta_-$ is the decomposition given by~\eqref{decompth}, then the following integral vanishes: 
	\begin{equation}\label{integral}
		\int_M (A_+\|\theta_+\|^2+A_-\|\theta_-\|^2)\vol^g=0,
	\end{equation}
	where the functions $A_+$ and $A_-$ are defined by \eqref{defa+} and \eqref{defa-}.
\end{lemma}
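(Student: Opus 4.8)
The plan is to integrate the pointwise scalar identity \eqref{traceidentity2} of Proposition~\ref{propscalarformula} over the compact manifold $M$ and then rewrite the resulting integrand in terms of $\theta_+$ and $\theta_-$. First I would observe that the divergence term contributes nothing: since $M$ is compact and without boundary, the divergence theorem gives $\int_M \delta(\beta)\,\vol^g=0$. Next I would dispose of the term $\<\theta, \J\theta-S\J S\theta\>\tr(\J)$. Because $\J$ is an involution on a connected manifold, its trace is a constant integer, so this term may be pulled out of the integral as $\tr(\J)\int_M\<\theta,\J\theta-S\J S\theta\>\vol^g$, which vanishes precisely by condition~$(*)$. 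After these two simplifications, integrating \eqref{traceidentity2} leaves only
\begin{equation*}
0=\int_M\Bigl[\tfrac12\|\theta\|^2\,B-\<\theta,S\theta\>\,C\Bigr]\vol^g,
\end{equation*}
where I abbreviate $B:=n^2+(\tr(S))^2-(\tr(\J))^2-(\tr(S\J))^2+\tr(S\J S\J)-n$ and $C:=n\tr(S)-\tr(\J)\tr(S\J)$. Note that $B$ and $C$ need not be constant on $M$, since $\tr(S\J)$ and $\tr(S\J S\J)$ are not, so they must remain inside the integral.

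The remaining step is purely algebraic. Using that $\theta_\pm$ lie in the $\pm1$-eigenspaces of $S$, so that $S\theta=\theta_+-\theta_-$ and $\theta_+\perp\theta_-$, I would record the two identities $\|\theta\|^2=\|\theta_+\|^2+\|\theta_-\|^2$ and $\<\theta,S\theta\>=\|\theta_+\|^2-\|\theta_-\|^2$. Substituting these into the integrand rewrites it as $(\tfrac12 B-C)\|\theta_+\|^2+(\tfrac12 B+C)\|\theta_-\|^2$. Comparing with the definitions \eqref{defa+} and \eqref{defa-}, one checks directly that $B-2C=A_+$ and $B+2C=A_-$, whence the integrand equals $\tfrac12 A_+\|\theta_+\|^2+\tfrac12 A_-\|\theta_-\|^2$. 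Multiplying the resulting identity by $2$ yields \eqref{integral}.

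Since each individual step is elementary, there is no serious obstacle here; the only points requiring care are the justification that $\tr(\J)$ is constant (so that condition~$(*)$ applies verbatim to the integrated identity, even though $B$ and $C$ vary) and the sign bookkeeping when matching the coefficients $\tfrac12 B\mp C$ against $A_\pm$. The real content has already been absorbed into Proposition~\ref{propscalarformula} and into Lemma~\ref{lemma*}, which ensures that $(*)$ holds under each of the hypotheses (i)--(vi); the present lemma is the clean integral consequence that will later permit extracting sign information about $A_+$ and $A_-$.
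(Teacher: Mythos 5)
Your proof is correct and follows essentially the same route as the paper: integrate the scalar identity of Proposition~\ref{propscalarformula}, use Stokes' theorem and condition~$(*)$ to remove the divergence and $\tr(\J)$ terms, then substitute $\|\theta\|^2=\|\theta_+\|^2+\|\theta_-\|^2$ and $\<\theta,S\theta\>=\|\theta_+\|^2-\|\theta_-\|^2$ and match coefficients with \eqref{defa+} and \eqref{defa-}. Your explicit remark that $\tr(\J)$ is constant (so it can be pulled out of the integral) is a point the paper leaves implicit in the formulation of $(*)$, but the argument is otherwise identical.
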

\begin{proof}
	Under the assumption $(*)$, integrating Equality~\eqref{traceidentity2} over the compact manifold $M$ yields
	\begin{equation}\label{traceidentity3}
		\begin{split}
			0=&\displaystyle\int_M\|\theta\|^2(n^2+(\tr(S))^2-(\tr(\J))^2-(\tr(S\J))^2+\tr(S\J S\J)-n)\vol^g\\
			&-2\displaystyle\int_M\<\theta, S\theta\>(n\tr(S)-\tr(\J)\tr(S\J))\vol^g.
		\end{split}	
	\end{equation}
	Replacing  $\|\theta\|^2=\|\theta_+\|^2+\|\theta_-\|^2$ and $\<\theta, S\theta\>=\|\theta_+\|^2-\|\theta_-\|^2$ into Equality~\eqref{traceidentity3} yields \eqref{integral}, where the coefficients $A_+$ and $A_-$ are given by  \eqref{defa+} and \eqref{defa-}. 
\end{proof}	

\begin{theorem}\label{thma+a-}
  The functions $A_+$ and $A_-$ defined by \eqref{defa+} and  \eqref{defa-} have the following properties:
	\vspace{-0.3cm}
	\begin{enumerate}[label=(\roman*)]
\item $A_+$ and $A_-$ are non-negative.
\item If  $\rk(E_+(S))=1$, then $A_-\equiv0$. If  $\rk(E_+(S))\geq 2$, then the following equivalence holds at each point $x\in M$: 
\begin{equation}\label{equiva-}
	A_-(x)=0 \Longleftrightarrow \exists\, \varepsilon\in\{-1,1\} \text{ such that } E_{+}(S)\subseteq E_{\varepsilon}(\J) \text{ at } x.
\end{equation}
\item If  $\rk(E_-(S))=1$, then $A_+\equiv0$.
If $\rk(E_-(S))\geq 2$, then the following equivalence holds at each point $x\in M$: 
\begin{equation}\label{equiva+}
	A_+(x)=0 \Longleftrightarrow \exists\, \varepsilon\in\{-1,1\} \text{ such that } E_{-}(S)\subseteq E_{\varepsilon}(\J) \text{ at } x.
\end{equation}
\end{enumerate}
\end{theorem}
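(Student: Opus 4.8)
The plan is to prove all three assertions \emph{pointwise}: fix $x\in M$ and regard $S=S_x$ and $\J=\J_x$ as two $g$-orthogonal involutions of the Euclidean space $(\T_xM,g_x)$. The starting point is the canonical form for a pair of orthogonal involutions: there is an orthogonal decomposition of $\T_xM$ into $S$- and $\J$-invariant subspaces of dimension one or two, where the one-dimensional blocks are common eigenspaces on which $(S,\J)$ acts as $(\varepsilon,\eta)$ with $\varepsilon,\eta\in\{-1,1\}$ — say $n_{\varepsilon\eta}$ of them — while each two-dimensional block carries, in a suitable orthonormal basis, $S=\mathrm{diag}(1,-1)$ and $\J=\left(\begin{smallmatrix}\cos2\phi & \sin2\phi\\ \sin2\phi & -\cos2\phi\end{smallmatrix}\right)$ for some $\phi\in(0,\pi/2)$. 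I would record that on such a $2$-dimensional block $\tr S=\tr\J=0$, $\tr(S\J)=2\cos2\phi$, and $\alpha=S\J-\J S$ satisfies $\tr(\alpha^2)=-8\sin^2 2\phi$.

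Next I would bring $A_\pm$ into a compact form. Using the involution identities $S\J\cdot\J S=I=\J S\cdot S\J$, a direct trace computation gives $\tr(\alpha^2)=2\tr(S\J S\J)-2n$, i.e. $\tr(S\J S\J)=n+\tfrac12\tr(\alpha^2)$. Substituting this into \eqref{defa+} and \eqref{defa-} and completing squares yields
\begin{equation*}
	A_\pm=(n\mp\tr S)^2-(\tr(S\J)\mp\tr\J)^2+\tfrac12\tr(\alpha^2).
\end{equation*}
Since $n+\tr S=2\,\rk E_{+}(S)$ and $n-\tr S=2\,\rk E_{-}(S)$, this already links $A_\pm$ to the ranks in the statement.

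Third, I would insert the block data. Factoring $A_-=(n+\tr S-\tr(S\J)-\tr\J)(n+\tr S+\tr(S\J)+\tr\J)+\tfrac12\tr(\alpha^2)$ and summing block contributions, the two factors become $4n_{+-}+4\sum_k\sin^2\phi_k$ and $4n_{++}+4\sum_k\cos^2\phi_k$, both non-negative, while $\tfrac12\tr(\alpha^2)=-16\sum_k\sin^2\phi_k\cos^2\phi_k$. Expanding the product and using $(\sum_k\sin^2\phi_k)(\sum_l\cos^2\phi_l)\ge\sum_k\sin^2\phi_k\cos^2\phi_k$ shows $A_-\ge0$ as a sum of four manifestly non-negative groups, proving $(i)$ for $A_-$; the statement for $A_+$ follows from the symmetry $A_+(S)=A_-(-S)$, since replacing $S$ by $-S$ leaves $\tr(\alpha^2)$ and $\tr\J$ unchanged and reverses the signs of $\tr S$ and $\tr(S\J)$.

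Finally, for $(ii)$ I would read off the equality case from the four non-negative groups: $A_-=0$ forces $n_{++}n_{+-}=0$, the vanishing of every cross term $\sum_{k\ne l}\sin^2\phi_k\cos^2\phi_l$, and, whenever a two-dimensional block is present, $n_{++}=n_{+-}=0$. Since each two-dimensional block contributes exactly one dimension to $E_{+}(S)$, a count gives: if $\rk E_{+}(S)=1$ the only configurations are a single one-dimensional block of type $(+,\pm)$ or a single two-dimensional block, and $A_-=0$ in all of them; if $\rk E_{+}(S)\ge2$, then $A_-=0$ is equivalent to having no two-dimensional block together with $n_{++}n_{+-}=0$, which is exactly the inclusion $E_{+}(S)\subseteq E_{\varepsilon}(\J)$ for some $\varepsilon$. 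Part $(iii)$ is then $(ii)$ applied to $-S$, using $E_{+}(-S)=E_{-}(S)$ and $A_+(S)=A_-(-S)$. The step I expect to be the main obstacle is precisely this last bookkeeping: matching the purely algebraic equality conditions to the geometric inclusion statement, and correctly isolating the rank-one exception, where a single genuine two-dimensional block makes $A_-$ vanish even though no eigenspace inclusion $E_{+}(S)\subseteq E_{\varepsilon}(\J)$ holds.
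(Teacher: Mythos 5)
Your proof is correct, and it takes a genuinely different route from the paper's. The paper works with a local orthonormal basis $\{\xi_i\}_{i=1}^r$ of $E_+(S)$, writes $S=-I+2\sum_i\xi_i\otimes\xi_i$, and reduces $A_-$ to the closed form $A_-=4\bigl(r^2-r+\sum_{i\neq j}(\<\J\xi_i,\xi_j\>^2-\<\J\xi_i,\xi_i\>\<\J\xi_j,\xi_j\>)\bigr)$; non-negativity then follows because each of the $r^2-r$ off-diagonal terms is bounded below by $-1$ via Cauchy--Schwarz, and the equality analysis forces $\J\xi_i=\varepsilon\xi_i$ for all $i$. You instead invoke the simultaneous canonical form of the pair of orthogonal involutions (one- and two-dimensional blocks, i.e.\ principal angles between $E_+(S)$ and $E_+(\J)$), rewrite $A_\pm=(n\mp\tr S)^2-(\tr(S\J)\mp\tr\J)^2+\tfrac12\tr(\alpha^2)$, and exhibit $A_-$ as $16\bigl(n_{++}n_{+-}+n_{+-}\sum_k\cos^2\phi_k+n_{++}\sum_k\sin^2\phi_k+\sum_{k\neq l}\sin^2\phi_k\cos^2\phi_l\bigr)$, a manifestly non-negative expression; I checked that your block traces ($\tr(S\J)=2\cos2\phi$, $\tr(\alpha^2)=-8\sin^2 2\phi$ per block, $\tr(S\J S\J)=n+\tfrac12\tr(\alpha^2)$) and the two factors $4n_{+-}+4\sum_k\sin^2\phi_k$ and $4n_{++}+4\sum_k\cos^2\phi_k$ are all correct, and your equality analysis (at most one genuine two-dimensional block, which then forces $n_{++}=n_{+-}=0$ and hence rank one) matches the statement, including the rank-one exception. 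Your approach buys a structurally transparent equality case and makes visible \emph{why} rank one is special (a single irreducible block kills $A_-$ without any eigenspace inclusion), at the cost of invoking the canonical-form theorem for a pair of involutions; the paper's argument is more elementary and self-contained but hides the equality discussion inside a termwise Cauchy--Schwarz. Both treat part (iii) identically, via the symmetry $S\mapsto -S$.
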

\begin{proof} Let $r$ denote the rank of $E_+(S)$.\\
	$(i)$ 	If $\{\xi_i\}_{i=\overline{1,r}}$ is a local orthonormal basis of $E_+(S)$, then the endomorphisms occurring in \eqref{defa+} and \eqref{defa-} and their traces can be locally  expressed as follows:
	\begin{equation}\label{formulaS}
		S=-I+2\sum_{i=1}^r \xi_i\otimes \xi_i, \quad \tr(S)=2r-n,
	\end{equation}	
	\begin{equation}\label{formulaSJ}
		S\J=-\J+2\sum_{i=1}^r \J \xi_i\otimes \xi_i, \quad \tr(S\J)=2\sum_{i=1}^r\<\J\xi_i, \xi_i\>-\tr(\J),
	\end{equation}	
	\begin{equation}\label{formulaJSJ}
		\J S\J=-I+2\sum_{i=1}^r \J \xi_i\otimes \J \xi_i, \quad \tr(\J S\J)=\tr(S),
	\end{equation}	
	\begin{equation}\label{formulaSJSJ}
		\begin{split}	
			S\J S\J&=-S+2\sum_{i=1}^r \J \xi_i\otimes S \J \xi_i\\
			&\!\!\overset{\eqref{formulaSJ}}{=}-S+2\sum_{i=1}^r \J \xi_i\otimes \left(-\J \xi_i+2\sum_{j=1}^r\<\J \xi_i, \xi_j\>\xi_j\right)\\
			&=-S-2\sum_{i=1}^r \J \xi_i\otimes \J \xi_i+4\sum_{i,j=1}^r\<\J \xi_i, \xi_j\>\J\xi_i\otimes \xi_j,
		\end{split}
	\end{equation}	
	\begin{equation}\label{formulatrSJSJ}
		\begin{split}	
			\tr(S\J S\J)&\overset{\eqref{formulaS}}{=}n-4r+4\sum_{i,j=1}^r\<\J\xi_i,  \xi_j\>^2.
		\end{split}
	\end{equation}	
	Altogether, replacing these traces in  \eqref{defa-}, we obtain:
	\begin{equation*}
		\begin{split}
			A_-=&\ n^2+(n-2r)^2-(\tr(\J))^2-\left(2\sum_{i=1}^r\<\J\xi_i, \xi_i\>-\tr(\J)\right)^2+n-4r+4\sum_{i,j=1}^r\<\J\xi_i,  \xi_j\>^2\\
			&-n+2n(2r-n)-2\tr(\J)\left(2\sum_{i=1}^r\<\J\xi_i,  \xi_i\>-\tr(\J)\right)\\
			=&\ 4r^2-4r-4\left(\sum_{i=1}^r\<\J\xi_i, \xi_i\>\right)^2+4\sum_{i,j=1}^r\<\J\xi_i, \xi_j\>^2.
		\end{split}
	\end{equation*}		
	Thus, the coefficient $A_-$ is given by the following formula:
	\begin{equation}\label{a-}
		A_-=4\left(r^2-r+\sum_{\underset{i\neq j}{i,j=1}}^r\left(\<\J\xi_i, \xi_j\>^2-\<\J\xi_i, \xi_i\>\<\J\xi_j, \xi_j\>\right)\right).
	\end{equation}	
	For the coefficient $A_+$ there is similar formula, which can actually be deduced from \eqref{a-}. Namely, if we replace $S$ by $-S$, hence $r$ by $n-r$, then $A_+$ for $S$ becomes $A_-$ for $-S$ and thus we obtain:
\begin{equation}\label{a+}
	A_+=4\left((n-r)^2-(n-r)+\sum_{\underset{i\neq j}{i,j=1}}^{n-r}\left(\< \J\eta_i, \eta_j\>^2-\< \J\eta_i, \eta_i\>\< \J\eta_j, \eta_j\>\right)\right),
\end{equation}	
where $\{\eta_i\}_{i=\overline{1,n-r}}$ is a local orthonormal basis of $E_-(S)$.

	We notice that in \eqref{a-} the sum $\displaystyle\sum_{\underset{i\neq j}{i,j=1}}^r\left(\< \J\xi_i, \xi_j\>^2-\< \J\xi_i, \xi_i\>\< \J\xi_j,\xi_j\>\right)$ has exactly $r^2-r$ terms and each of them is greater or equal to $-1$ by the Cauchy-Schwarz inequality: $|\< \J \xi_i, \xi_i\>|\leq 1$, so $\< \J\xi_i, \xi_j\>^2-\< \J\xi_i, \xi_i\>\< \J\xi_j,\xi_j\>\geq -1$, for all distinct $i,j\in\{1,\dots, r\}$. Hence, $A_-\geq0$. The same argument applied to $-S$ instead of $S$ shows that $A_+\geq 0$.

$(ii)$ If $r=1$, then it follows directly from \eqref{a-} that the function $A_-$ vanishes identically.
	
Let $x\in M$. When $r\geq 2$, we know from the proof of $(i)$ that $A_-(x)=0$ if and only if in the point $x$ equality holds in all the above Cauchy-Schwarz inequalities, \emph{i.e.} $|\<\xi_i, \J \xi_i\>|=1$, and $\<\xi_i, \J \xi_i\>\<\xi_j, \J \xi_j\>=1$ for all $i,j\in\{1, \dots, r\}$, meaning that there exists $\varepsilon\in\{-1,1\}$ such that for all $i\in\{1,\dots, r\}$ we have $\J\xi_i=\varepsilon\xi_i$, so $E_{+}(S)\subseteq E_{\varepsilon}(\J)$ at $x$. This proves the equivalence \eqref{equiva-}.

$(iii)$	Follows from $(ii)$ by duality, if we consider $-S$ instead of $S$.
\end{proof}

\section{Conformal product structures on reducible compact Riemannian manifolds}

 In this  section we prove the following result, which together with Proposition~\ref{proptriple} and Lemma~\ref{lemma*} will imply Theorem~\ref{mainthm}:

\begin{theorem}\label{thmcond*}
	If $M$ is a compact manifold satisfying Hypothesis~\ref{hypo}, then the following assertions hold:
	\begin{enumerate}[label=(\roman*)]
		\item 	$S\J=\J S$.
		\item $E_{\varepsilon}(S)\subseteq E_{\varepsilon'}(\J)$, for some $\varepsilon, \varepsilon'\in\{-1,1\}$.
	\end{enumerate}	
\end{theorem}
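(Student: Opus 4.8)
The plan is to confront the integral formula of Lemma~\ref{lemmaintegral} with the pointwise rigidity of Theorem~\ref{thma+a-}, and then to globalize the resulting alignment of $S$ and $\J$; throughout I use condition~$(*)$, which is the regime in which this theorem is applied (it is supplied by Lemma~\ref{lemma*}). Since $A_+,A_-\ge0$ by Theorem~\ref{thma+a-}(i), the vanishing of $\int_M(A_+\|\theta_+\|^2+A_-\|\theta_-\|^2)\vol^g$ in Lemma~\ref{lemmaintegral} forces both non-negative summands to vanish pointwise, so that $A_+\|\theta_+\|^2\equiv0$ and $A_-\|\theta_-\|^2\equiv0$ on $M$. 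I also record an algebraic reduction used repeatedly: a single inclusion $E_\varepsilon(S)\subseteq E_{\varepsilon'}(\J)$ already forces commutation, for then $\J$ acts as the scalar $\varepsilon'$ on $E_\varepsilon(S)$, hence preserves both $E_\varepsilon(S)$ and its orthogonal complement $E_{-\varepsilon}(S)$, giving $S\J=\J S$. Thus assertion~(ii) implies assertion~(i), and it suffices to exhibit one global inclusion of an eigenbundle of $S$ into an eigenbundle of $\J$.

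The main case is when both $D$-parallel distributions have rank at least $2$. At every point where $\theta\neq0$, one of $\theta_+,\theta_-$ is non-zero, so the corresponding coefficient $A_+$ or $A_-$ vanishes there; as both $E_\pm(S)$ have rank $\ge2$, the equivalences \eqref{equiva-} and \eqref{equiva+} place one eigenbundle of $S$ inside an eigenbundle of $\J$ at that point, whence $S\J=\J S$ there by the reduction above. Hence $\alpha=S\J-\J S$ vanishes on the open set $\{\theta\neq0\}$, and so on its closure. To propagate this I use \eqref{derivalpha}: every term of $\nabla_X\alpha$ carries a factor $\theta$ or $S\theta$, so $\nabla\alpha$ vanishes at each point where $\theta=0$. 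Thus $\alpha$ is $\nabla$-parallel, of constant norm, on each connected component of the interior of $\{\theta=0\}$; since $M$ is connected and $\{\theta\neq0\}$ is non-empty, every such component has non-empty boundary contained in $\overline{\{\theta\neq0\}}$, where $\alpha=0$, and matching the constant norm to its boundary value $0$ yields $\alpha\equiv0$ on $M$. This proves assertion~(i). Now Remark~\ref{a+a-const} makes $A_+$ and $A_-$ constant; since $A_\pm\|\theta_\pm\|^2\equiv0$ and $\theta\not\equiv0$, at least one of the constants vanishes, and the corresponding equivalence of Theorem~\ref{thma+a-} delivers a global inclusion, with constant sign by connectedness. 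This is assertion~(ii).

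It remains to treat the rank-one case; up to replacing $S$ by $-S$ we may assume $E_+(S)=\mR\xi$ is a line, so $A_-\equiv0$ by Theorem~\ref{thma+a-}(ii) and only $A_+\|\theta_+\|^2\equiv0$ survives. Where $\theta_+\neq0$ the coefficient $A_+$ vanishes and, since $\rk E_-(S)=n-1\ge2$, \eqref{equiva+} yields $E_-(S)\subseteq E_\varepsilon(\J)$ and local commutation. What the integral machinery does not control is the locus $\{\theta_+=0\}$, where $\theta$ is tangent to $E_-(S)$; its global form, $S\theta=-\theta$, is the sub-case I expect to be the main obstacle. There one argues directly with the $D$-parallel line $\mR\xi$: substituting $S\theta=-\theta$ into \eqref{derivalpha} gives $\nabla_X\alpha=2\langle X,\xi\rangle(\J\xi\wedge\theta+\J\theta\wedge\xi)$, so $\alpha$ is parallel in every direction orthogonal to $\xi$ and obeys a first-order differential equation along the flow of $\xi$. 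The plan is to integrate this equation and use compactness of $M$ to force $\J\xi\wedge\xi=0$, i.e. that the line $\mR\xi=E_+(S)$ is $\J$-invariant; the inclusion $E_+(S)\subseteq E_\varepsilon(\J)$ then gives both assertions via the reduction above. Controlling this ODE uniformly along $\xi$, rather than the pointwise and integral arguments available when both factors have rank $\ge2$, is where the real difficulty lies.
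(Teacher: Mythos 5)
Your treatment of the case where both $D$-parallel distributions have rank at least $2$ is correct and is essentially the paper's own argument: pointwise vanishing of $A_\pm\|\theta_\pm\|^2$, the equivalences \eqref{equiva-}--\eqref{equiva+} giving commutation on $\{\theta\neq0\}$, propagation through the interior of $\{\theta=0\}$ via the constancy of $\|\alpha\|^2$ there and a boundary argument, and then constancy of $A_\pm$ to extract a global inclusion. Your observation that a single inclusion $E_\varepsilon(S)\subseteq E_{\varepsilon'}(\J)$ forces $S\J=\J S$ is also correct and is used implicitly in the paper.

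The genuine gap is the rank-one case, and you acknowledge it yourself: "the plan is to integrate this equation\dots is where the real difficulty lies" is a statement of intent, not a proof. Two things are missing. First, you do not close the locus $\{\theta_+\neq0\}$: there the inclusion $E_-(S)\subseteq E_\varepsilon(\J)$ combined with $\rk E_+(S)=1$ forces $E_{-\varepsilon}(\J)=E_+(S)$, hence $\J=-\varepsilon S$, hence $\xi$ is $\nabla$-parallel and \eqref{koxi} gives $\theta=0$ there --- a contradiction showing this locus is \emph{empty}. This is how the paper (Proposition \ref{proptheta-}) establishes $\theta(\xi)\equiv0$ globally, which you need before you may use $S\theta=-\theta$ as an identity on all of $M$. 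Second, and more seriously, your proposed ODE for $\alpha$ along the flow of $\xi$ is never integrated, and it is not clear how compactness alone would force $\J\xi\wedge\xi=0$ from $\nabla_\xi\alpha=2(\J\xi\wedge\theta+\J\theta\wedge\xi)$, since the right-hand side involves $\theta$ and not $\alpha$ itself. The paper closes this case by a different mechanism: it exploits $\Ric\circ\J=\J\circ\Ric$ (a consequence of $\nabla\J=0$), computes $\Ric(\xi)$ and $\Ric(\J\xi)$ explicitly from \eqref{koxi} (Lemma \ref{lemmaric}), and extracts from the resulting identity, via evaluations at extrema of $\theta(\J\xi)$ on the compact $M$, that $\theta(\J\xi)\equiv0$ and $\langle\xi,\J\xi\rangle$ is constant; it then applies the curvature formula \eqref{riemxi} to $X_1\in E_+(\J)$ and $Y=\xi_2$ and a maximum argument on $\|\theta_1\|^2$ to show that one of the two $\J$-components of $\xi$ vanishes identically, which is exactly the inclusion $E_-(S)\subseteq E_{\pm}(\J)$ you are after. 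Without some substitute for this Ricci/maximum-principle input, your rank-one argument does not go through.
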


In order to prove Theorem~\ref{thmcond*} we distinguish two cases, depending on the rank of the 
conformal product structure, namely if this rank is greater or equal to $2$, or if it is $1$.  This distinction is imposed by the equivalences obtained in Theorem~\ref{thma+a-}, $(ii)$ and $(iii)$.

{\bf Case 1. } We assume that the rank of the conformal product structure is greater or equal to $2$. 

$(i)$ We consider  the closed subset $C$ of $M$ where $\theta$ vanishes, $C:=\{x\in M\, |\, \theta(x)=0\}$.\\
We first show that $S$ and $\J$ commute on the complement $ M\setminus C$. For this let  $x\in M\setminus C$, \emph{i.e.} $\theta(x)\neq 0$. Hence, $\theta_+(x)\neq 0$ or $\theta_-(x)\neq 0$. By replacing $S$ with $-S$ if necessary, we may assume without loss of generality that the latter holds. Then \eqref{integral} yields that $A_-(x)=0$. Since the rank of the conformal product structure is assumed to be greater or equal to $2$, the equivalence \eqref{equiva+} implies that $E_{+}(S)\subseteq E_{\varepsilon}(\J)$ at the point $x$, for some $\varepsilon\in\{-1,1\}$. By replacing this time $\J$ with $-\J$ if necessary, we may assume without loss of generality that $\varepsilon=1$, so $E_{+}(S)\subseteq E_{+}(\J)$ at $x$. Looking at the orthogonal complements of these subspaces yields the inclusion $E_{-}(\J)\subseteq E_{-}(S)$ at $x$, hence
the tangent space at $x$ splits as an orthogonal direct sum $\T_x M=E_{+}(S)_x\oplus E_{-}(\J)_x\oplus K$, where $K$ is the orthogonal complement of $E_{-}(\J)_x$ in $E_{-}(S)_x$. On each of these three subspaces the endomorphisms $S$ and $\J$ act either as identity or minus identity, thus $S$ and $\J$ commute at the point $x$.\\
If the interior of $C$ is empty, then, since $S$ and $\J$ commute on $M\setminus C$, they commute everywhere on $M$.\\
If $\overset{\circ}{C}\neq \emptyset$, then by Lemma~\ref{lemmaalpha}, the function $\|S\J-\J S\|^2$ is constant on any connected component $V$ of $\overset{\circ}{C}$. Let $\overline{V}$ denote the closure of $V$ and $\partial V=\overline{V}\setminus V$ its boundary. If  $\partial V=\emptyset$, then $V$ is both open and closed, so the connectedness of $M$ yields $V=M$, whence $C=M$, which is impossible since $\theta$ is assumed not to vanish identically. Thus $\partial V\neq \emptyset$. Since $\partial V \subseteq \partial C =\partial(M\setminus C)$, for any point $x\in\partial V$ we can find two sequences $(x_n)_n$ with $x_n\in V$ and $(y_n)_n$ with $y_n\in M\setminus C$ and $\displaystyle \lim_{n\to+\infty}x_n= \displaystyle \lim_{n\to+\infty}y_n=x$. Hence, because  the function $\|S\J-\J S\|^2$ is continuous, it  vanishes on $M\setminus C$ and is constant on $V$, it follows that it also vanishes at $x$ and its constant value on $V$ must be zero. This proves that $S$ and $\J$ commute everywhere on $M$ in this case as well.

$(ii)$ Since $S$ and $\J$ commute, we know by Remark~ \ref{a+a-const} that the coefficients $A_+$ and $A_-$ are constant on $M$. Then Equality \eqref{integral} implies that at least one of $A_+$ or $A_-$ must vanish, which according to the equivalences \eqref{equiva-} and \eqref{equiva+} yields $(ii)$.

{\bf Case 2. } We assume that the conformal product structure has rank $1$.

In order to fix the notation, we assume that $\rk(E_-(S))=1$ (otherwise we replace $S$ by $-S$). Up to taking a finite cover, we can assume that the conformal product structure is orientable, so both $D$-parallel distributions $E_+(S)$ and $E_-(S)$ are orientable and  we choose a unit length vector field $\xi$ spanning the $1$-dimensional distribution $E_-(S)$.

We start by a few general results about conformal product structures of rank $1$, that will be needed in our proof. Recall first the following result, which was proven in \cite[ Lemma 2.7]{mp2025a}:

\begin{lemma}\label{rank1xi}
	The unit length vector field $\xi$ spanning the $D$-parallel $1$-dimensional distribution satisfies:
	\begin{equation}\label{koxi}
		\nabla_X\xi=-\theta(\xi)X+\langle X, \xi\rangle\theta,\qquad\forall X\in \Gamma(\T M),
	\end{equation}		
	and in particular:
	\begin{equation}\label{codiffxi}
		\nabla_\theta\xi=0,\qquad	\delta\xi=(n-1)\theta(\xi), \qquad \di\xi=\xi\wedge \theta.
	\end{equation}		
\end{lemma}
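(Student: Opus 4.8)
The plan is to use that $\xi$ spans a $D$-parallel line, which forces $D_X\xi$ to be proportional to $\xi$, and to pin down the proportionality $1$-form from the Weyl condition $Dg=-2\theta\otimes g$; once $D_X\xi$ is known, formula \eqref{koxi} drops out of the conformal Koszul formula \eqref{ko}, and the three remaining identities are immediate contractions.

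First I would note that, since the distribution $E_-(S)=\mathrm{span}(\xi)$ is $D$-parallel and one-dimensional, for every $X\in\Gamma(\T M)$ the derivative $D_X\xi$ lies in $E_-(S)$, so $D_X\xi=\lambda(X)\xi$ for a uniquely determined $1$-form $\lambda$. To identify $\lambda$, I would differentiate the constant function $\langle\xi,\xi\rangle=1$ with $D$. Using $(D_Xg)(\xi,\xi)=-2\theta(X)\langle\xi,\xi\rangle=-2\theta(X)$ together with the product rule, one gets $0=X(\langle\xi,\xi\rangle)=-2\theta(X)+2\lambda(X)\langle\xi,\xi\rangle=-2\theta(X)+2\lambda(X)$, whence $\lambda=\theta$ and $D_X\xi=\theta(X)\xi$.

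Next I would substitute $D_X\xi=\theta(X)\xi$ into the conformal Koszul formula \eqref{ko} applied to $Y=\xi$:
$$\theta(X)\xi=\nabla_X\xi+\theta(\xi)X+\theta(X)\xi-\langle X,\xi\rangle\theta^\sharp,$$
and solving for $\nabla_X\xi$ gives $\nabla_X\xi=-\theta(\xi)X+\langle X,\xi\rangle\theta^\sharp$, which is precisely \eqref{koxi}.

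The three consequences then follow by direct computation from \eqref{koxi}. Taking $X=\theta^\sharp$ and using $\langle\theta^\sharp,\xi\rangle=\theta(\xi)$ gives $\nabla_\theta\xi=-\theta(\xi)\theta^\sharp+\theta(\xi)\theta^\sharp=0$. For a local orthonormal basis $\{e_i\}$, contracting \eqref{koxi} yields $\sum_i\langle\nabla_{e_i}\xi,e_i\rangle=-n\theta(\xi)+\sum_i\langle e_i,\xi\rangle\theta(e_i)=-(n-1)\theta(\xi)$, so $\delta\xi=-\sum_i\langle\nabla_{e_i}\xi,e_i\rangle=(n-1)\theta(\xi)$; and since $e_i\wedge e_i=0$ and $\sum_i\langle e_i,\xi\rangle e_i=\xi$, the formula $\di\xi=\sum_i e_i\wedge\nabla_{e_i}\xi$ collapses to $\xi\wedge\theta$. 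All steps are elementary; the only point requiring any care is the identification $\lambda=\theta$, which is exactly where the non-metricity of the Weyl connection, measured by $\theta$, enters --- everything else is routine bookkeeping with \eqref{koxi}.
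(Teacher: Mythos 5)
Your proof is correct and complete. The paper itself does not prove this lemma but simply cites \cite[Lemma 2.7]{mp2025a}; your argument --- deducing $D_X\xi=\theta(X)\xi$ from the $D$-parallelism of the line field together with $Dg=-2\theta\otimes g$ and $\|\xi\|=1$, then converting to $\nabla$ via \eqref{ko} and contracting --- is the natural one and supplies a self-contained justification that the paper leaves to the reference.
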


A direct consequence of  Lemma~\ref{rank1xi} is the following computation for any vector field $X$:
\begin{equation*}
	\begin{split}
		X(\<\xi, \J\xi\>)&=\<\nabla_X\xi, \J\xi\>+\<\xi, \J\nabla_X\xi\>\\
		&=-\theta(\xi)\<X, \J \xi\>+\<X, \xi\>\theta(\J \xi)-\theta(\xi)\<X, \J \xi\>+\<X, \xi\>\theta(\J\xi)\\
		&=2\theta(\J\xi)\<\xi, X\>-2\theta(\xi)\<\J\xi, X\>,
	\end{split}
\end{equation*}	
which yields the equality:
\begin{equation}\label{eqxiJxi}
	\di(\<\xi, \J\xi\>)=2\left(\theta(\J\xi)\xi-\theta(\xi)\J\xi\right).
\end{equation}
\begin{lemma}\label{lemmaric}
 If $\Ric$ denotes the Ricci curvature of the metric $g$, seen as a symmetric endomorphism of $\T M$, then the following equalities hold:
\begin{equation}\label{ricxi}
	\Ric(\xi)=(n-2)\di(\theta(\xi))-(\delta\theta)\xi,
\end{equation}		
\begin{equation}\label{ricJxi}
	\Ric(\J\xi)=\theta(\J \xi)\theta-\theta(\xi)\J\theta-(\delta\J\theta)\xi-\di(\theta(\J\xi))-\J\di(\theta(\xi))+\tr(\J)\di(\theta(\xi)).
\end{equation}		
\end{lemma}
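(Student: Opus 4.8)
The plan is to obtain both identities by the same mechanism: compute the action of the Riemannian curvature tensor on $\xi$, respectively on $\J\xi$, by differentiating the first-order identity \eqref{koxi} a second time, and then take the appropriate Ricci trace. Differentiating $\nabla_X\xi=-\theta(\xi)X+\<X,\xi\>\theta$ once more and antisymmetrising in $X,Y$, I would first establish the pointwise formula
\[
R_{X,Y}\xi=Y(\theta(\xi))X-X(\theta(\xi))Y+\big(\<X,\xi\>\<Y,\theta\>-\<Y,\xi\>\<X,\theta\>\big)\theta+\<Y,\xi\>TX-\<X,\xi\>TY,
\]
where $T=\nabla\theta$. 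Contracting via $\<\Ric(\xi),Z\>=\sum_i\<R_{e_i,Z}\xi,e_i\>$ over a local orthonormal frame $\{e_i\}$, the first two terms produce $(n-1)Z(\theta(\xi))$, the $\theta$-term produces $\theta(\xi)\<\theta,Z\>-\|\theta\|^2\<\xi,Z\>$, and the last two produce $\tr(T)\<\xi,Z\>-\<TZ,\xi\>$. Using $\tr(T)=-\delta\theta$ together with the relation $\<TZ,\xi\>=Z(\theta(\xi))+\theta(\xi)\theta(Z)-\|\theta\|^2\<\xi,Z\>$ (obtained by differentiating $\theta(\xi)$ and invoking \eqref{koxi} once more), all the $\theta$-terms cancel and one is left with $(n-2)Z(\theta(\xi))-\delta\theta\,\<\xi,Z\>$, which is exactly \eqref{ricxi}.

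For \eqref{ricJxi} the computation is formally identical, carried out for the vector field $\J\xi$. Since $\J$ is $\nabla$-parallel and symmetric, one has $\nabla_X(\J\xi)=\J\nabla_X\xi=-\theta(\xi)\J X+\<X,\xi\>\J\theta$ and $\nabla_X(\J\theta)=\J TX$, so the two-step differentiation yields $R_{X,Y}(\J\xi)=\J R_{X,Y}\xi$. Contracting as above, the crucial bookkeeping is to recognise the contractions $\sum_i\<\J e_i,e_i\>=\tr(\J)$ and $\sum_i\<\J Te_i,e_i\>=\tr(\J T)=-\delta(\J\theta)$, and to rewrite $\<TZ,\J\xi\>$ as $Z(\theta(\J\xi))+\theta(\xi)\<\J\theta,Z\>-\<\J\theta,\theta\>\<\xi,Z\>$ (again by differentiating $\theta(\J\xi)$ and using \eqref{koxi}). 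After this substitution the two terms in $\<\J\theta,\theta\>\<\xi,Z\>$ cancel, and collecting what remains gives precisely the six terms $\theta(\J\xi)\theta-\theta(\xi)\J\theta-(\delta\J\theta)\xi-\di(\theta(\J\xi))-\J\di(\theta(\xi))+\tr(\J)\di(\theta(\xi))$ of \eqref{ricJxi}.

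I expect the only delicate part to be purely organisational: keeping the sign conventions for the curvature tensor and for $\delta$ straight, and carrying out correctly the two integration-by-parts style substitutions that turn $\<TZ,\xi\>$ and $\<TZ,\J\xi\>$ into exact differentials plus algebraic remainders. As an independent check I would use that the parallel symmetric involution $\J$ commutes with $\Ric$ — the metric being, by de Rham, locally a Riemannian product along $E_{+}(\J)\oplus E_{-}(\J)$, whose Ricci tensor is block-diagonal — so that $\Ric(\J\xi)=\J\Ric(\xi)$. Combined with \eqref{ricxi} this forces \eqref{ricJxi} to agree with the shorter expression $(n-2)\J\,\di(\theta(\xi))-\delta\theta\,\J\xi$, which confirms the final formula.
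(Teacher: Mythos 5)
Your computation is correct and follows essentially the same route as the paper: differentiate \eqref{koxi} a second time and take the Ricci trace (the paper contracts $\nabla_{e_i}\nabla_X\xi-\nabla_X\nabla_{e_i}\xi$ directly, using $\nabla_\theta\xi=0$ and $\delta\xi=(n-1)\theta(\xi)$ from \eqref{codiffxi}, rather than first writing out the full tensor $R_{X,Y}\xi$, but this is only a difference in bookkeeping). One caveat: your proposed ``independent check'' does not work as stated --- \eqref{ricJxi} is \emph{not} algebraically equal to $(n-2)\J\,\di(\theta(\xi))-(\delta\theta)\J\xi$; the relation $\Ric(\J\xi)=\J\Ric(\xi)$ combined with the two formulas of the lemma is precisely the nontrivial geometric constraint \eqref{eqric} that the paper extracts \emph{afterwards}, so trying to match the two expressions term by term would wrongly suggest an error in your (correct) computation.
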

\begin{proof}
Using a local orthonormal basis $\{e_i\}_{i=1,n}$  parallel at the point where the computation is done, we obtain for any vector field $X\in \Gamma(\T M)$ assumed to be parallel at the given point:
	\begin{equation*}
		\begin{split}
\mathrm{Ric}(X,\xi)&=\sum_{i=1}^n \langle R_{e_i, X}\xi, e_i\rangle=\sum_{i=1}^n \left(\langle \nabla_{e_i}\nabla_X\xi, e_i\rangle-\langle\nabla_X\nabla_{e_i}\xi, e_i\rangle\right)\\
			&\overset{\eqref{koxi}}{=}\sum_{i=1}^n \left(\langle \nabla_{e_i}(-\theta(\xi)X+\langle X, \xi\rangle\theta), e_i\rangle-X(\langle\nabla_{e_i}\xi, e_i\rangle)\right)\\
			&=-X(\theta(\xi))+\langle X,\nabla_\theta \xi\rangle-\langle X, \xi\rangle \delta\theta+X(\delta\xi)\\
			&\overset{\eqref{codiffxi}}{=}-X(\theta(\xi))-\langle X, \xi\rangle \delta\theta+(n-1)X(\theta(\xi))\\
			&=(n-2)X(\theta(\xi))-\langle X, \xi\rangle \delta\theta,
		\end{split}
	\end{equation*}		
which proves \eqref{ricxi}. For the expression of $\Ric(\J\xi)$ we compute similarly:
\begin{equation*}
	\begin{split}
		\mathrm{Ric}(X,\J\xi)&=\sum_{i=1}^n \langle R_{e_i, X}\J\xi, e_i\rangle=\sum_{i=1}^n \left(\langle \nabla_{e_i}\nabla_X\J\xi, e_i\rangle-\langle\nabla_X\nabla_{e_i}\J\xi, e_i\rangle\right)\\
		&\overset{\eqref{koxi}}{=}\sum_{i=1}^n \left(\langle \nabla_{e_i}(-\theta(\xi)\J X+\langle X, \xi\rangle\J\theta), e_i\rangle-X(\langle\nabla_{e_i}\J\xi, e_i\rangle)\right)\\
		&=-\J X(\theta(\xi))+\langle X,\nabla_{\J \theta}\xi\rangle-\langle X, \xi\rangle \delta(\J \theta)+X(\delta\J\xi)\\
		&\overset{\eqref{koxi}}{=}-\J(\di(\theta(\xi)))(X)-\theta(\xi)\<X, \J\theta\>+\theta( \J\xi)\<X, \theta\>-\langle X, \xi\rangle \delta(\J \theta)+(\di(\delta\J\xi))(X).
	\end{split}
\end{equation*}		
This formula, together with
\begin{equation*}
	\begin{split}
		\delta(\J\xi)=-\sum_{i=1}^n e_i\lrcorner(\J\nabla_{e_i}\xi)\overset{\eqref{koxi}}{=}-\sum_{i=1}^n e_i\lrcorner(-\theta(\xi)\J\e_i+\<e_i, \xi\>\J\theta)=\tr(\J)\theta(\xi)-\theta(\J\xi).
	\end{split}
\end{equation*}		
yields \eqref{ricJxi}.
\end{proof}	

Since $\Ric$ and $\J$ commute,  we have in particular  $\Ric(\J\xi)=\J \Ric(\xi)$, which according to \eqref{ricxi} and \eqref{ricJxi} reads:
\begin{equation}\label{eqric}
	\theta(\J \xi)\theta-\theta(\xi)\J\theta-(\delta\J\theta)\xi+(\delta\theta)\J\xi-\di(\theta(\J\xi))-(n-1)\J\di(\theta(\xi))+\tr(\J)\di(\theta(\xi))=0.
\end{equation}		
  We now establish some further results under the hypotheses of Theorem~\ref{thmcond*}.

\begin{proposition}\label{proptheta-}
If  condition $(*)$ is satisfied, then $\xi$ and $\J\xi$ are contained in the kernel of the Lee form and have constant scalar product: 
\begin{equation}\label{eqtheta-}
	\theta(\xi)=0,
\end{equation}	
\begin{equation}\label{eqtheta-J}
	\theta(\J\xi)=0,
\end{equation}	
\begin{equation}\label{constangl}
\di(\<\xi, \J \xi\>)=0.
\end{equation}	
\end{proposition}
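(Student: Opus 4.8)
The plan is to deduce \eqref{constangl} from the first two equalities: once $\theta(\xi)=0$ and $\theta(\J\xi)=0$, the right-hand side of \eqref{eqxiJxi} vanishes, giving $\di(\langle\xi,\J\xi\rangle)=0$. So everything reduces to proving \eqref{eqtheta-} and \eqref{eqtheta-J}. Throughout I write $\xi$ for the chosen unit section of $E_-(S)$ (so $\rk(E_-(S))=1$, whence $r:=\rk(E_+(S))=n-1\ge 2$ because $n\ge 3$), and I abbreviate $v:=\theta(\J\xi)$ and $w:=\langle\xi,\J\xi\rangle$.

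For \eqref{eqtheta-} I would argue as follows. Since $\rk(E_-(S))=1$, Theorem~\ref{thma+a-}~(iii) gives $A_+\equiv 0$; moreover by \eqref{decompth} and $E_-(S)=\langle\xi\rangle$ we have $\theta_-=\theta(\xi)\xi$, so $\|\theta_-\|^2=\theta(\xi)^2$. As condition~$(*)$ is assumed, Lemma~\ref{lemmaintegral} yields $\int_M A_-\,\theta(\xi)^2\,\vol^g=0$, and since $A_-\ge 0$ by Theorem~\ref{thma+a-}~(i) the integrand vanishes identically: $A_-\,\theta(\xi)^2\equiv 0$. Hence on the open set $\{\theta(\xi)\neq 0\}$ we have $A_-=0$, and the equivalence \eqref{equiva-} (applicable since $r\ge 2$) forces $E_+(S)\subseteq E_{\varepsilon}(\J)$ there; a dimension count (using $\J\neq\pm\Id$) gives $E_+(S)=E_{\varepsilon}(\J)$ and therefore $\J\xi=\varepsilon\,\xi$ for a locally constant sign $\varepsilon\in\{-1,1\}$. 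Differentiating $\J\xi=\varepsilon\xi$ with $\nabla\J=0$ and substituting \eqref{koxi} (first with $X=\xi$, which yields $\J\theta=\varepsilon\theta$, then for arbitrary $X$) produces $\theta(\xi)(\J X-\varepsilon X)=0$ for all $X$; since $\theta(\xi)\neq 0$ on this set, $\J=\varepsilon\Id$ there, contradicting $\J\neq\pm\Id$ (the eigendistributions of the $\nabla$-parallel involution $\J$ have constant rank on the connected manifold $M$). Thus $\{\theta(\xi)\neq 0\}=\emptyset$, which is \eqref{eqtheta-}.

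The proof of \eqref{eqtheta-J} is where I expect the real difficulty to lie. Knowing $\theta(\xi)=0$, equation \eqref{koxi} simplifies to $\nabla_X\xi=\langle X,\xi\rangle\theta$, so $\delta\xi=0$ and the leaves of $E_+(S)=\xi^{\perp}$ are totally geodesic, while \eqref{eqxiJxi} becomes $\di w=2v\,\xi$. From \eqref{ricxi} one gets $\Ric(\xi)=-(\delta\theta)\xi$, and since $\Ric$ commutes with $\J$, also $\Ric(\J\xi)=-(\delta\theta)\J\xi$; correspondingly \eqref{eqric} reduces to the first-order relation $\di v=v\theta-(\delta(\J\theta))\xi+(\delta\theta)\J\xi$. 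Applying $\di$ to $\di w=2v\,\xi$ and using $\di\xi=\xi\wedge\theta$ from \eqref{codiffxi} gives $(\di v-v\theta)\wedge\xi=0$, which together with the simplified relation forces $(\delta\theta)(\J\xi\wedge\xi)=0$ pointwise. Hence at each point either $\J\xi\parallel\xi$, in which case $v=\theta(\J\xi)=\pm\theta(\xi)=0$, or $\delta\theta=0$.

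The remaining and hardest step is to rule out $v\neq 0$ on the open set $\{\J\xi\not\parallel\xi\}$, where $\delta\theta=0$ and $w$ is constant along the totally geodesic leaves of $\xi^{\perp}$. The obstruction is the term $\delta(\J\theta)$ in the relation for $\di v$, which is not controlled by the available first-order identities; indeed all of these identities are mutually consistent and leave $v$ underdetermined, so a purely pointwise conclusion is impossible. I would therefore invoke the compactness of $M$ through a global argument for the leaf-wise constant function $w$, exploiting $\di(\langle\xi,\J\xi\rangle)=2v\,\xi$, the vanishing of $v$ on $\{|w|=1\}$, and $\delta\xi=0$ — for instance via a maximum-principle or integral estimate for $w$ — to force $v\equiv 0$, which is \eqref{eqtheta-J} and, with \eqref{eqxiJxi}, also \eqref{constangl}.
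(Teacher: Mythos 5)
Your proof of \eqref{eqtheta-} is correct and essentially the paper's argument: the same use of $A_+\equiv 0$, of the vanishing of $\int_M A_-\|\theta_-\|^2$, and of the equivalence \eqref{equiva-} to get $E_+(S)=E_\varepsilon(\J)$ on the set where $\theta(\xi)\neq0$. (The paper derives the contradiction there by observing that $\J=\pm S$ forces $\xi$ to be $\nabla$-parallel, whence $\theta=0$ by \eqref{koxi}; your differentiation of $\J\xi=\pm\xi$ is an equivalent route, modulo the harmless sign slip that $E_+(S)=E_\varepsilon(\J)$ gives $\J\xi=-\varepsilon\xi$.) The reduction of \eqref{constangl} to the first two identities via \eqref{eqxiJxi} is also exactly what the paper does.

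The proof of \eqref{eqtheta-J}, however, has a genuine gap: you stop at the dichotomy ``either $\J\xi\parallel\xi$ or $\delta\theta=0$'' and then only gesture at an unspecified maximum-principle or integral argument for $w=\langle\xi,\J\xi\rangle$, which as sketched does not close (at an extremum of $w$ the relation $\di w=2v\xi$ only yields $v=0$ \emph{at that point}). Moreover, your diagnosis that the term $\delta(\J\theta)$ is uncontrolled and that the identities ``leave $v$ underdetermined'' is mistaken; two observations you already have in hand finish the proof. First, instead of reading $(\delta\theta)\,\J\xi\wedge\xi=0$ as a pointwise dichotomy, contract it with $\theta$: since $\theta(\xi)=0$, this gives the identity $(\delta\theta)\,\theta(\J\xi)=0$ everywhere on $M$. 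Second, take the scalar product of the relation $\di v=v\theta-(\delta\J\theta)\xi+(\delta\theta)\J\xi$ with $\theta$: the $\delta(\J\theta)$ term is multiplied by $\theta(\xi)=0$, and the $\delta\theta$ term by $\theta(\J\xi)$, which the previous identity kills, leaving the closed scalar relation $\|\theta\|^2v=\langle\theta,\di v\rangle$. At a maximum and at a minimum of $v$ on the compact manifold $M$ the right-hand side vanishes, and $\|\theta\|^2v=0$ forces $v=0$ there (if instead $\theta=0$ at such a point, then $v=\theta(\J\xi)=0$ there anyway); hence $\max v=\min v=0$ and $v\equiv0$. This is precisely the paper's argument, and it uses only the first-order identities you already derived together with compactness.
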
	

\begin{proof}
According to Theorem~\ref{thma+a-}, the assumption $\rk(E_-(S))=1$ implies that $A_+=0$, so Equality \eqref{integral} reads:
\begin{equation}\label{integralhalf}
	\int_M A_-\|\theta_-\|^2\vol^g=0.
\end{equation}
Since $A_-$ is non-negative, the above equality implies that at any point $x\in M$ either $\theta_-(x)=0$ or $A_-(x)=0$. We consider the following closed set of~$M$:
$$C:=\{x\in M\, |\, \theta_-(x)=0\}=\{x\in M\, |\, \theta(\xi)(x)=0\}.$$
We claim that $C=M$. Assume for a contradiction that there exists $x_0\in M\setminus C$. Then there exists a connected neighbourhood $V_0\subseteq M\setminus C$ of $x_0$ on which the function $\|\theta_-\|^2$ is strictly positive. By \eqref{integralhalf} we obtain that the non-negative function $A_-$ must vanish on $V_0$. Furthermore, by the equivalence \eqref{equiva-}, $A_-(x)=0$ means that 
there exists $\varepsilon\in\{-1,1\}$, such that  $E_{+}(S)\subseteq E_{\varepsilon}(\J)$ at $x$. Again without loss of generality, we may assume that $\varepsilon=1$ at each point of the connected neighbourhood $V_0$, so $E_{+}(S)\subseteq E_{+}(\J)$ at each $x\in V_0$. Taking the orthogonal complements yields  $E_{-}(\J)\subseteq E_{-}(S)$, which implies $E_{-}(\J)=E_{-}(S)$, because $\rk(E_-(S))=1$ and $\J$ is not the identity of $\T M$. Thus, we obtain that $\J=S$ on $V_0$. In particular, since $\J$ is $\nabla$-parallel, the vector field $\xi$ must also be parallel on $V_0$: $\nabla_X\xi=0$, for all vector fields $X$. Then \eqref{koxi} implies that on $V_0$ the Lee form $\theta$ vanishes, so in particular $\theta_-=0$, but this contradicts the fact that $V_0$ is contained in $M\setminus C$. Hence, our assumption is false, which yields $M=C$ and proves \eqref{eqtheta-}.

Substituting \eqref{eqtheta-} into Equality~\eqref{eqric} yields:
\begin{equation}\label{eqric1}
	\theta(\J \xi)\theta-\di(\theta(\J\xi))-(\delta\J\theta)\xi+(\delta\theta)\J\xi=0.
\end{equation}		
On the other hand, applying the exterior derivative to Equality ~\eqref{eqxiJxi} yields:
$$0=\di^2(\<\xi, \J\xi\>)=2\di(\theta(\J\xi))\wedge \xi+2\theta(\J\xi)\di\xi\overset{\eqref{codiffxi}}{=}2\left(\di(\theta(\J\xi))-\theta(\J\xi)\theta\right)\wedge\xi,$$
so taking the exterior product with $\xi$ in \eqref{eqric1}, we obtain:
\begin{equation*}
	(\delta\theta)\J\xi\wedge \xi=0.
\end{equation*}		
Taking the interior product with $\theta$ in this formula and using \eqref{eqtheta-} shows that $(\delta\theta)\theta(\J\xi)=0$. Substituting now this term in the equality obtained by taking in \eqref{eqric1} the scalar product with $\theta$ yields $$\|\theta\|^2\theta(\J\xi)=\<\theta,\di(\theta(\J\xi))\>.$$
 At any extremum point of the function $\theta(\J\xi)$, the right-hand side of this equality vanishes. In particular, this equality shows that at a maximum and minimum point of $\theta(\J\xi)$, which exist since the manifold is compact, the function $\theta(\J\xi)$ vanishes (because at such a point $x_0$ either $\theta(\J\xi)(x_0)=0$ or $\theta(x_0)=0$, which again implies in particular that $\theta(\J\xi)(x_0)=0$). Hence $\theta(\J\xi)$ is identically zero on $M$.
Finally, \eqref{constangl} follows directly from \eqref{eqxiJxi}, \eqref{eqtheta-} and \eqref{eqtheta-J}.
\end{proof}
 
 \begin{corollary}
 	If  condition $(*)$ is satisfied, then the covariant derivative of $\xi$ and the Riemannian curvature tensor of $g$ applied to $\xi$ are given by the following formulas, for all vector fields $X,Y$: 
 	\begin{equation}\label{koxi1}
 	\nabla_X\xi=\langle X, \xi\rangle\theta,
 	\end{equation}	
 		\begin{equation}\label{riemxi}
 			\begin{split}
 				R_{X,Y} \xi=\langle Y,\theta\rangle\langle X, \xi\rangle\theta+\langle Y, \xi\rangle\nabla_X\theta-\langle X,\theta\rangle\langle Y, \xi\rangle\theta-\langle X, \xi\rangle\nabla_Y\theta.
 			\end{split}	
 		\end{equation}
 \end{corollary}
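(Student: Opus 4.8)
The plan is to derive both formulas directly from Lemma~\ref{rank1xi} by substituting the simplified conditions from Proposition~\ref{proptheta-}. First I would establish \eqref{koxi1}: the general formula \eqref{koxi} reads $\nabla_X\xi=-\theta(\xi)X+\langle X,\xi\rangle\theta$, and since Proposition~\ref{proptheta-} gives $\theta(\xi)=0$ (Equality \eqref{eqtheta-}) under condition~$(*)$, the first term vanishes identically, leaving exactly $\nabla_X\xi=\langle X,\xi\rangle\theta$. This is immediate and requires no real work.

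The substance is in \eqref{riemxi}. Here I would compute the curvature by its definition $R_{X,Y}\xi=\nabla_X\nabla_Y\xi-\nabla_Y\nabla_X\xi-\nabla_{[X,Y]}\xi$, working at a point with $X,Y$ assumed parallel there, so that $[X,Y]=0$ at that point and the bracket term drops. Using \eqref{koxi1}, I have $\nabla_Y\xi=\langle Y,\xi\rangle\theta$, and then
\begin{equation*}
\nabla_X\nabla_Y\xi=\nabla_X(\langle Y,\xi\rangle\theta)=X(\langle Y,\xi\rangle)\theta+\langle Y,\xi\rangle\nabla_X\theta.
\end{equation*}
Since $Y$ is parallel at the point, $X(\langle Y,\xi\rangle)=\langle Y,\nabla_X\xi\rangle=\langle Y,\langle X,\xi\rangle\theta\rangle=\langle X,\xi\rangle\langle Y,\theta\rangle$, again by \eqref{koxi1}. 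Substituting gives
\begin{equation*}
\nabla_X\nabla_Y\xi=\langle X,\xi\rangle\langle Y,\theta\rangle\theta+\langle Y,\xi\rangle\nabla_X\theta,
\end{equation*}
and antisymmetrizing in $X,Y$ yields precisely the claimed expression \eqref{riemxi}.

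There is no genuine obstacle here; the content of the corollary has already been extracted into Proposition~\ref{proptheta-}, and what remains is a short, standard curvature computation. The only point requiring mild care is the correct handling of the derivative $X(\langle Y,\xi\rangle)$: one must not forget that even though $Y$ is parallel at the chosen point, the scalar function $\langle Y,\xi\rangle$ still varies because $\xi$ is not parallel, so its derivative contributes the $\langle X,\xi\rangle\langle Y,\theta\rangle\theta$ term. Keeping $T=\nabla\theta$ unexpanded (the paper writes $\nabla_X\theta$ rather than $T(X)$) keeps the formula clean and matches the statement.
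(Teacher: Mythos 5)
Your argument is correct and coincides with the paper's own proof: \eqref{koxi1} is obtained by substituting $\theta(\xi)=0$ from Proposition~\ref{proptheta-} into \eqref{koxi}, and \eqref{riemxi} follows from the same pointwise computation of $\nabla_X\nabla_Y\xi-\nabla_Y\nabla_X\xi$ with $X,Y$ parallel at the point, using \eqref{koxi1} twice. Nothing further is needed.
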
	
 \begin{proof}
 The formula \eqref{koxi1} follows directly from \eqref{koxi} and \eqref{eqtheta-}. In order to show \eqref{riemxi}, we compute for any vector fields $X,Y$ parallel at the point where the computation is done:
 \begin{equation*}
 	\begin{split}
 		R_{X,Y} \xi&=\nabla_X\nabla_Y\xi-\nabla_Y\nabla_X\xi\overset{\eqref{koxi1}}{=}\nabla_X(\langle Y, \xi\rangle\theta)-\nabla_Y(\langle X, \xi\rangle\theta)\\
 		&=\langle Y, \nabla_X \xi\rangle\theta+\langle Y, \xi\rangle\nabla_X\theta-\langle X,  \nabla_Y\xi\rangle\theta-\langle X, \xi\rangle\nabla_Y\theta\\
 		&\overset{\eqref{koxi1}}{=}\langle Y,\theta\rangle\langle X, \xi\rangle\theta+\langle Y, \xi\rangle\nabla_X\theta-\langle X,\theta\rangle\langle Y, \xi\rangle\theta-\langle X, \xi\rangle\nabla_Y\theta.
 	\end{split}	
 \end{equation*}
\end{proof}

We consider the decomposition of the tangent bundle given by the fact that the holonomy of $g$ is reducible: $\T M=E_+(\J)\oplus E_-(\J)$ and we denote by $\theta=\theta_1+\theta_2$  the corresponding splitting of the Lee form, where 
$$\theta_1:=\frac{1}{2}\left( \theta+\J \theta\right) \quad { and }\quad  \theta_2:=\frac{1}{2}\left( \theta-\J \theta\right) .$$
Also the vector field $\xi$ decomposes as $\xi=\xi_1+\xi_2$ with $\xi_1:=\frac{1}{2}\left( \xi+\J \xi\right)$ and $\xi_2:=\frac{1}{2}\left( \xi-\J \xi\right)$. Let us notice that both components have constant lengths due to \eqref{constangl}: $\|\xi_1\|^2=\frac{1}{2}(1+\<\xi, \J\xi\>)$ and  $\|\xi_2\|^2=\frac{1}{2}(1-\<\xi, \J\xi\>)$ are constant functions. In particular, each of them either vanishes everywhere on $M$ or has no zeros. 

{\bf Claim.} One of the vector fields  $\xi_1$ or $\xi_2$ vanishes everywhere on $M$. 

{\sl Proof of the Claim.} Assume for a contradiction  that both $\xi_1$ and $\xi_2$ are not identically zero. By the above remark, $\xi_1$ and $\xi_2$ have no zeros. In this case applying equality~\eqref{riemxi} to  some vector field $X=X_1\in E_+(\J)$ and to $Y=\xi_2\in E_-(\J)$ and using the fact that $\theta(\xi_2)=0$ (which is a consequence of \eqref{eqtheta-} and \eqref{eqtheta-J}), we obtain:
\begin{equation}\label{riemxi1}
	\begin{split}
		0=\| \xi_2\|^2\nabla_{X_1}\theta-\theta_1(X_1)\| \xi_2\|^2\theta-\langle X_1, \xi_1\rangle\nabla_{\xi_2}\theta,
	\end{split}	
\end{equation}
yielding
\begin{equation}\label{riemxi2}
	\begin{split}
		\nabla_{X_1}\theta=\theta_1(X_1)\theta+\frac{\langle X_1, \xi_1\rangle}{\| \xi_2\|^2}\nabla_{\xi_2}\theta.
	\end{split}	
\end{equation}

% \begin{equation}\label{riemxi1}
	% 	\begin{split}
		% 		\theta_2(X_2)\langle X_1, \xi_1\rangle\theta-\theta_1(X_1)\langle X_2, \xi_2\rangle\theta+\langle X_2, \xi_2\rangle\nabla_{X_1}\theta-\langle X_1, \xi_1\rangle\nabla_{X_2}\theta=0.
		% 	\end{split}	
	% \end{equation}

Choosing now $X_1=\theta_1$ and taking the scalar product with $\theta_1$ yields $\theta_1(\|\theta_1\|^2)=2\|\theta_1\|^4$. In particular, this equality shows that $\|\theta_1\|^2$ vanishes at its maximum, so $\theta_1$ vanishes on $M$. The same argument (exchanging the indices 1 and 2) shows that $\theta_2$ vanishes on $M$. Thus $\theta=0$, in contradiction to the hypothesis of Theorem~\ref{thmcond*}. Thus our assumption was false, which proves the Claim. \qed

By replacing $\J$ with $-\J$ if necessary, we can thus assume that $\xi_2$ vanishes on $M$. This means that $\xi=\xi_1\in E_{+}(\J)$, so in this case $E_{-}(S)\subseteq E_{+}(\J)$, showing $(ii)$ of Theorem~\ref{thmcond*}. In particular it follows that the tangent bundle splits as an orthogonal  direct sum $\T M=E_{-}(S)\oplus K\oplus E_{-}(\J)$, where $K$ is the orthogonal complement of $E_{-}(S)$ in $E_{+}(\J)$. On each of these three subspaces the endomorphisms $S$ and $\J$ act either as the identity or minus the identity, thus $S$ and $\J$ commute, proving $(i)$ of Theorem~\ref{thmcond*}.

\section{Appendix}

In this appendix we express all traces occurring in Equality~\eqref{traceidentity0} in terms of codifferentials:

\begin{lemma}\label{traces}
	The following identities hold:
	\begin{equation}\label{trT}
		\tr(T)=-\delta\theta,
    \end{equation}	
\begin{equation}\label{trJT}
	\tr(\J T)=-\delta(\J \theta),
\end{equation}	
\begin{equation}\label{trST}
	\tr(ST)=-\delta(S\theta)-\|\theta\|^2\tr(S)+n\<\theta, S\theta\>,
\end{equation}	
\begin{equation}\label{trJST}
	\tr(\J ST)=-\delta(\J S\theta)-\|\theta\|^2\tr(S\J)+\<\theta, S\theta\>\tr(\J),
\end{equation}	
\begin{equation}\label{trSJT}
	\tr(S\J T)=-\delta(S \J \theta)-\<\theta, \J\theta\>\tr(S)+n\<\theta, S\J \theta\>,
\end{equation}	
\begin{equation}\label{trJSJT}
	\tr(\J S\J T)=-\delta(\J S \J \theta)-\<\theta, \J S \J\theta\>-\<\theta,\J\theta\>\tr(S\J)+\<\theta, S\J \theta\>\tr(\J)+\<\theta, S\theta\>,
\end{equation}	
\begin{equation}\label{trSJST}
	\begin{split}
		\tr(S\J  ST)=&-\delta(S \J S \theta)+(n+1)\<\theta, S\J S\theta\>-\<\theta, S\J\theta\>\tr(S)\\
		&-\<\theta, \J \theta\>-\|\theta\|^2\tr(\J)+\<\theta, S\theta\>\tr(S\J),
	\end{split}
\end{equation}	
		\begin{equation}\label{trSJSJT}
			\begin{split}
				\tr(S\J S\J T)=&-\delta(S\J S\J \theta)+(n+1)\<\theta, S\J S\J \theta\>-\<\theta, \J S\J\theta\>\tr(S)\\
				&-\|\theta\|^2-\<\theta, \J\theta\>\tr (\J)+\<\theta, S\J \theta\>\tr(S\J),
			\end{split}
		\end{equation}
		\begin{equation}\label{trJSJST}
			\begin{split}
				\tr(\J S\J ST)=&-\delta(\J S\J S\theta)-\<\theta, S\J S\J\theta\>-\<\theta, S\J \theta\>\tr(S\J)+\<\theta, S\J S\theta\>\tr (\J)\\
				&+\|\theta\|^2(1-\tr(S\J S\J))+\<\theta, S\theta\>\tr(S).
			\end{split}
		\end{equation}
\end{lemma}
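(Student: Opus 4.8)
The plan is to derive all nine identities from a single \emph{master formula}, obtained by integrating by parts once. Identifying $T=\nabla\theta$ with the endomorphism $X\mapsto\nabla_X\theta$, for any endomorphism field $F$ of $\T M$ and any local orthonormal frame $\{e_i\}$ one has $\tr(FT)=\sum_i\<F\nabla_{e_i}\theta, e_i\>$, while the codifferential of the $1$-form $F\theta$ expands by the Leibniz rule as
$$\delta(F\theta)=-\sum_i\<\nabla_{e_i}(F\theta), e_i\>=-\sum_i\<(\nabla_{e_i}F)\theta, e_i\>-\tr(FT).$$
Rearranging gives
$$\tr(FT)=-\delta(F\theta)-\sum_{i}\<(\nabla_{e_i}F)\theta, e_i\>,$$
which reduces each identity to the computation of the correction term $\Sigma(F):=\sum_i\<(\nabla_{e_i}F)\theta, e_i\>$.

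Since $\J$ is $\nabla$-parallel, the only contributions to $\nabla_{e_i}F$ come from the $S$-factors in $F$, differentiated through the Leibniz rule, each $S$ being replaced using the explicit covariant derivative \eqref{derivS}. Writing $F$ with a distinguished $S$-factor in the form $ASB$, I would first establish the elementary \emph{one-factor contraction}
$$\sum_i\<A(\nabla_{e_i}S)B\theta, e_i\>=\<SB\theta, A\theta\>+\<\theta, B\theta\>\tr(AS)-\<S\theta, B\theta\>\tr(A)-\<B\theta, AS\theta\>,$$
valid for arbitrary endomorphism fields $A,B$. This follows directly from \eqref{derivS}, the definition of $\odot$, and the contractions $\sum_i\<e_i, u\>\<v, e_i\>=\<u,v\>$ and $\sum_i\<Ce_i, e_i\>=\tr(C)$. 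The quantity $\Sigma(F)$ is then the sum of such terms over all $S$-factors of $F$.

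With this toolkit the nine identities follow by bookkeeping. The cases $F=\Id$ and $F=\J$ give \eqref{trT} and \eqref{trJT} at once, since $\nabla\Id=\nabla\J=0$ forces $\Sigma\equiv0$. For the endomorphisms carrying a single $S$, namely $S$, $\J S$, $S\J$ and $\J S\J$, a single application of the one-factor contraction produces \eqref{trST}, \eqref{trJST}, \eqref{trSJT} and \eqref{trJSJT} after using the symmetry of $S$ and $\J$ — in particular the identity $\<\theta, S\J\theta\>=\<\theta, \J S\theta\>$ and, more generally, $\<\theta, F\theta\>=\<\theta, F^{*}\theta\>$ — together with $S^2=\J^2=\Id$ to recognise the cancellations that reduce the raw output to the stated right-hand sides. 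For the products $S\J S$, $S\J S\J$ and $\J S\J S$, which carry two $S$-factors, the Leibniz rule produces two one-factor contributions; summing them and simplifying with $S^2=\J^2=\Id$ and the cyclicity of the trace (for instance $\tr(S\J S)=\tr(\J)$) yields \eqref{trSJST}, \eqref{trSJSJT} and \eqref{trJSJST}.

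The conceptual content is contained entirely in the master formula and the one-factor contraction; the rest is purely computational. The main obstacle is therefore the algebraic bookkeeping in the two-$S$ cases, where one must keep track of several bilinear expressions in $\theta$ of the form $\<\theta, CS\theta\>$ and correctly identify which of them coincide by the self-adjointness of $S$ and $\J$, so that the many terms collapse into the compact form stated in the lemma.
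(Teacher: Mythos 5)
Your proposal is correct and follows essentially the same route as the paper: in each case the paper writes $\tr(F T)=\sum_i\<\nabla_{e_i}(F\theta),e_i\>-\sum_i\<(\nabla_{e_i}F)\theta,e_i\>$, uses $\nabla\J=0$, and expands the $\nabla S$ contributions via \eqref{derivS} and the definition of $\odot$, which is exactly your master formula plus one-factor contraction. The only difference is organizational — you abstract the repeated contraction into a single reusable identity (which checks out, e.g.\ it reproduces \eqref{trST}, \eqref{trJST} and \eqref{trSJST} correctly), whereas the paper redoes the expansion explicitly for each of the nine traces.
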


\begin{proof}
	Equality \eqref{trT} follows directly from the definition of $T:=\nabla\theta$.  Equality \eqref{trJT} follows from \eqref{trT} and the fact that $\J$ is $\nabla$-parallel. For the remaining formulas, we use \eqref{derivS}.
If $\{e_i\}_{i=1,n}$  is a local orthonormal basis of $\T M$, then we obtain:
		\begin{equation*}
			\begin{split}
				\tr(ST)&= \sum_{i=1}^n \<ST e_i, e_i\>=\sum_{i=1}^n \<S\nabla_{e_i}\theta, e_i\>=\sum_{i=1}^n \<\nabla_{e_i}(S\theta), e_i\>-\sum_{i=1}^n \<(\nabla_{e_i}S) \theta, e_i\>\\
				&\overset{\eqref{derivS}}{=}-\delta(S\theta)-\sum_{i=1}^n\<(Se_i\odot\theta -S\theta\odot e_i)(\theta), e_i\>\\
				&=-\delta(S\theta)-\sum_{i=1}^n\left(\<Se_i, \theta\>\<\theta,e_i\>+\|\theta\|^2\<Se_i, e_i\>-\<S\theta, \theta\>\<e_i, e_i\>-\<e_i,\theta\>\<S\theta, e_i\>\right)\\
				&=-\delta(S\theta)-\tr(S)\|\theta\|^2+n\<\theta, S\theta\>.
			\end{split}
		\end{equation*}
We compute similarly the other traces:
		\begin{equation*}
			\begin{split}
				\tr(\J ST)&= \sum_{i=1}^n \<\J ST e_i, e_i\>=\sum_{i=1}^n \<\J S\nabla_{e_i}\theta, e_i\>=\sum_{i=1}^n \<\nabla_{e_i}(\J S\theta), e_i\>-\sum_{i=1}^n \<\J (\nabla_{e_i}S) \theta, e_i\>\\
				&\overset{\eqref{derivS}}{=}-\delta(\J S\theta)-\sum_{i=1}^n\<(Se_i\odot\theta -S\theta\odot e_i)(\theta), \J e_i\>\\
				&=-\delta(\J S\theta)-\sum_{i=1}^n\<\<Se_i, \theta\>\theta+\|\theta\|^2Se_i-\<S\theta, \theta\>e_i-\<e_i,\theta\>S\theta, \J e_i\>\\
				&=-\delta(\J S\theta)-\<S\theta, \J \theta\>-\|\theta\|^2\tr(S\J )+\<\theta, S\theta\>\tr(\J)+\<\theta, \J S\theta\>\\
				&=-\delta(\J S\theta)-\|\theta\|^2\tr(S\J )+\<\theta, S\theta\>\tr(\J).
			\end{split}
		\end{equation*}
		\begin{equation*}
			\begin{split}
				\tr(S\J T)&= \sum_{i=1}^n \<S\J T e_i, e_i\>=\sum_{i=1}^n \<S\J \nabla_{e_i}\theta, e_i\>=\sum_{i=1}^n \<\nabla_{e_i}(S \J \theta), e_i\>-\sum_{i=1}^n \< (\nabla_{e_i}S) \J\theta, e_i\>\\
				&\overset{\eqref{derivS}}{=}-\delta(S \J \theta)-\sum_{i=1}^n\<(Se_i\odot\theta -S\theta\odot e_i)(\J \theta), e_i\>\\
				&=-\delta(S\J \theta)-\sum_{i=1}^n\<\<Se_i, \J \theta\>\theta+\<\theta, \J\theta\>Se_i-\<S\theta, \J \theta\>e_i-\<e_i,\J \theta\>S\theta, e_i\>\\
				&=-\delta(S \J\theta)-\<\theta,  S\J \theta\>-\<\theta, \J \theta\>\tr(S)+n\<S\theta, \J \theta\>+\<S \theta, \J\theta\>\\
				&=-\delta(S \J \theta)-\<\theta, \J \theta\>\tr(S)+n\<\theta, S\J \theta\>.
			\end{split}
		\end{equation*}
	\begin{equation*}
		\begin{split}
			\tr(\J S\J T)=& \sum_{i=1}^n \<\J S\J T e_i, e_i\>=\sum_{i=1}^n \<\J S\J \nabla_{e_i}\theta, e_i\>=\sum_{i=1}^n \<\nabla_{e_i}(\J S \J \theta)- \J (\nabla_{e_i}S) \J\theta, e_i\>\\
			\overset{\eqref{derivS}}{=}&-\delta(\J S \J \theta)-\sum_{i=1}^n\<(Se_i\odot\theta -S\theta\odot e_i)(\J \theta), \J e_i\>\\
			=&-\delta(\J S\J \theta)-\sum_{i=1}^n\left(\<Se_i, \J \theta\>\<\theta, \J e_i\>+\<\theta, \J\theta\>\<Se_i, \J e_i\>\right)\\
			&+\sum_{i=1}^n\left(\<S\theta, \J \theta\>\<e_i,  \J e_i\>+\<e_i,\J \theta\>\<S\theta, \J e_i\>\right)\\
			=&-\delta(\J S \J\theta)-\<\theta,  \J S\J \theta\>-\<\theta, \J \theta\>\tr(S\J)+\<\theta, S \J \theta\>\tr(\J)+\<\theta, S \theta\>.
		\end{split}
	\end{equation*}
\begin{equation*}
	\begin{split}
		&\tr(S \J S T)=\sum_{i=1}^n \<S \J S T e_i, e_i\>=\sum_{i=1}^n \<S \J S \nabla_{e_i}\theta, e_i\>=\\
		=&\sum_{i=1}^n \<\nabla_{e_i}(S \J S \theta), e_i\>-\sum_{i=1}^n \< (\nabla_{e_i}S)\J S\theta, e_i\>-\sum_{i=1}^n \< S \J (\nabla_{e_i}S)\theta, e_i\>\\
		\overset{\eqref{derivS}}{=}&-\delta(S \J S \theta)-\sum_{i=1}^n\<(Se_i\odot\theta -S\theta\odot e_i)(\J S \theta), e_i\>-\sum_{i=1}^n\<(Se_i\odot\theta -S\theta\odot e_i)(\theta), \J S e_i\>\\
		=&-\delta(S \J S \theta)-\sum_{i=1}^n\<\<Se_i, \J S\theta\>\theta+\<\theta, \J S\theta\>Se_i+\<S\theta, \J S\theta\>e_i+\<e_i,\J S\theta\>S\theta, e_i\>\\
		&-\sum_{i=1}^n\<\<Se_i, \theta\>\theta+\<\theta, \theta\>Se_i
		+\<S\theta, \theta\>e_i+\<e_i,\theta\>S\theta, \J S e_i\>\\
		=&-\delta(S \J S\theta)-\<\theta,  S\J S \theta\>-\<\theta,  \J S\theta\>\tr(S)+n\<\theta,  S\J S \theta\>+\<\theta,  S\J S \theta\>\\
		&-\<\theta,  \J\theta\>-\|\theta\|^2\tr(\J)+\<\theta, S \theta\>\tr(S\J)+\<\theta,  S \J S\theta\>\\
		=&-\delta(S \J S\theta)+(n+1)\<\theta,  S\J S \theta\>-\<\theta,  S\J \theta\>\tr(S)-\<\theta,  \J\theta\>-\|\theta\|^2\tr(\J)+\< \theta, S\theta\>\tr(S\J).
	\end{split}
\end{equation*}

\begin{equation*}
	\begin{split}
		&\tr(S \J S \J T)=\sum_{i=1}^n \<S \J S \J T e_i, e_i\>=\sum_{i=1}^n \<S \J S \J\nabla_{e_i}\theta, e_i\>=\\
		=&\sum_{i=1}^n \<\nabla_{e_i}(S \J S \J \theta), e_i\>-\sum_{i=1}^n \< (\nabla_{e_i}S)\J S\J \theta, e_i\>-\sum_{i=1}^n \< S \J (\nabla_{e_i}S)\J\theta, e_i\>\\
		\overset{\eqref{derivS}}{=}&-\delta(S \J S \J \theta)-\sum_{i=1}^n\<(Se_i\odot\theta -S\theta\odot e_i)(\J S \J\theta), e_i\>\\&-\sum_{i=1}^n\<(Se_i\odot\theta -S\theta\odot e_i)(\J \theta), \J S e_i\>\\
		=&-\delta(S \J S \J\theta)-\sum_{i=1}^n\<\<Se_i, \J S\J \theta\>\theta+\<\theta, \J S\J\theta\>Se_i+\<S\theta, \J S\J\theta\>e_i+\<e_i,\J S\J\theta\>S\theta, e_i\>\\
		&-\sum_{i=1}^n\<\<Se_i, \J\theta\>\theta+\<\theta, \J\theta\>Se_i+\<S\theta, \J \theta\>e_i+\<e_i,\J\theta\>S\theta, \J S e_i\>\\
		=&-\delta(S \J S\J \theta)-\<\theta,  S\J S \J\theta\>-\<\theta,  \J S\J \theta\>\tr(S)+n\<\theta,  S\J S \J\theta\>+\<\theta,  S\J S \J\theta\>-\|\theta\|^2\\
		&-\<\theta,  \J\theta\>\tr(P)+\<\theta,  S\J\theta\>\tr(S\J)+\<\theta,  S\J S \J\theta\>\\
		=&-\delta(S \J S\J \theta)+(n+1)\<\theta,  S\J S \J\theta\>-\<\theta,  \J S\J \theta\>\tr(S)-\|\theta\|^2\\&-\<\theta,  \J\theta\>\tr(P)+\<\theta,  S\J\theta\>\tr(S\J).
	\end{split}
\end{equation*}

Finally, we compute :

\begin{equation*}
	\begin{split}
		&\tr(\J S \J S T)=\sum_{i=1}^n \<\J S \J S T e_i, e_i\>=\sum_{i=1}^n \<\J S \J S \nabla_{e_i}\theta, e_i\>=\\
		=&\sum_{i=1}^n \<\nabla_{e_i}(\J S \J S \theta), e_i\>-\sum_{i=1}^n \< \J (\nabla_{e_i}S)\J S \theta, e_i\>-\sum_{i=1}^n \<\J S \J (\nabla_{e_i}S)\theta, e_i\>\\
		\overset{\eqref{derivS}}{=}&
%		-\delta(\J S \J S \theta)-\sum_{i=1}^n\<(Se_i\odot\theta -S\theta\odot e_i)(\J S\theta), \J e_i\>\\&-\sum_{i=1}^n\<(Se_i\odot\theta -S\theta\odot e_i)(\theta), \J S \J e_i\>\\
%		=&
		-\delta(\J S \J S\theta)-\sum_{i=1}^n\<\<Se_i, \J S \theta\>\theta+\<\theta, \J S\theta\>Se_i+\<S\theta, \J S\theta\>e_i+\<e_i,\J S\theta\>S\theta, \J e_i\>\\
		&-\sum_{i=1}^n\<\<Se_i, \theta\>\theta+\<\theta, \theta\>Se_i+\<S\theta, \theta\>e_i+\<e_i,\theta\>S\theta, \J S\J e_i\>\\
		=&-\delta(\J S \J S\theta)-\<\theta,  S\J S \J\theta\>-\<\theta,  S\J \theta\>\tr(S\J)+\<\theta, S\J S\theta\>\tr(\J)\\
		&+\|\theta\|^2(1-\tr(S\J S\J))+\<\theta, S\theta\>\tr(S).
	\end{split}
\end{equation*}

\end{proof}

\end{document}